\newtheorem{theorem}{Theorem}[section]
\newtheorem{lemma}[theorem]{Lemma}
\newtheorem{remark}[theorem]{Remark}
\newtheorem{definition}[theorem]{Definition}
\newtheorem{corollary}[theorem]{Corollary}
\newtheorem{proposition}[theorem]{Proposition}
\newcommand{\R}{\mathbb R}
\newcommand{\N}{\mathbb N}
\newcommand{\Z}{\mathbb Z}
\newcommand{\Q}{\mathbb Q}
\newcommand{\F}{\mathbb F}
\newcommand{\st}[1]{\vskip 1mm\noindent{\bf #1}\ }
\newcommand{\stst}[1]{\vskip 1mm\noindent\hskip5mm{\bf #1}\ }
\newcommand{\ststst}[1]{\vskip 1mm\noindent\hskip10mm{\bf #1}\ }
\newcommand{\stststst}[1]{\vskip 1mm\noindent\hskip15mm{\bf #1}\ }
\newcommand{\ststststst}[1]{\vskip 1mm\noindent\hskip20mm{\bf #1}\ }
\def\op{\operatorname}
\def\as#1{\renewcommand\arraystretch{#1}}
\def\chr{\op{char}}
\def\cs{\operatorname{cs}}
\def\df{\operatorname{Diff}}
\def\dsc{\operatorname{Disc}}
\def\diso{\lower.4ex\hbox{$\downarrow$}\raise.4ex\hbox{\mbox{\scriptsize
$\wr$}}}
\def\exp{\op{exp}}
\def\ind{\op{ind}}
\def\iso{\ \lower.3ex\hbox{\as{.08}$\begin{array}{c}\lra\\\mbox{\tiny $\sim\,$}\end{array}$}\ }
\def\kb{\overline{k}}
\def\ks{k^{\op{sep}}}
\def\la{\lambda}
\def\lg{l\raise.6ex\hbox to.2em{\hss.\hss}l}
\def\lra{\longrightarrow}
\def\m{{\mathfrak m}}
\def\md#1{\ \mbox{\rm(mod }{#1})}
\def\om{\omega}
\def\oo{{\mathcal O}}
\def\orb{\hbox to  .3em{$\backslash$}\backslash}
\def\ord{\op{ord}}
\def\p{\mathfrak{p}}
\def\P{\mathfrak{P}}
\def\q{\mathfrak{Q}}
\def\res{\operatorname{Res}}
\def\t{\theta}
\def\ty{\mathbf{t}}
\newcounter{cs}
\newcommand{\casos}{\begin{itemize}}
\newcommand{\fcasos}{\end{itemize}\setcounter{cs}{1}}
\newfont{\tit}{cmr12 scaled \magstep3}
\begin{document}
\title{Local computation of differents and discriminants}

\author[Nart]{Enric Nart}
\address{Departament de Matem\`{a}tiques,
         Universitat Aut\`{o}noma de Barcelona,
         Edifici C, E-08193 Bellaterra, Barcelona, Catalonia, Spain}
\email{nart@mat.uab.cat}
\thanks{Partially supported by MTM2009-10359 from the
Spanish MEC}
\date{}
\keywords{different, discriminant, global field, local field, Montes algorithm, Newton polygon, Okutsu invariant, OM representation, resultant, Single-factor lifting algorithm}

\makeatletter
\@namedef{subjclassname@2010}{%
  \textup{2010} Mathematics Subject Classification}

\subjclass[2010]{Primary 11Y40; Secondary 11Y05, 11R04, 11R27}

\begin{abstract}
We obtain several results on the computation of different and discriminant ideals of finite extensions of local fields.  
As an application, we deduce routines to compute the $\p$-adic valuation of the discriminant $\dsc(f)$, and the resultant $\res(f,g)$, for polynomials $f(x),g(x)\in A[x]$, where $A$ is a Dedekind domain and $\p$ is a non-zero prime ideal of $A$ with finite residue field. These routines do not require the computation of neither $\dsc(f)$ nor $\res(f,g)$; hence, they are useful in cases where this latter computation is inefficient because the polynomials have a large degree or very large coefficients. 
\end{abstract}

\maketitle
\section*{Introduction}
Let $A$ be a Dedekind domain whose field of fractions $K$ is a global field. Typical instances of $A$ are the ring of integers of a number field, or the polynomial ring in one indeterminate over a finite field. Let $L/K$ be a finite separable extension and $B$ the integral closure of $A$ in $L$. Let $\t\in L$ be a primitive element of $L/K$, with minimal polynomial $f(x)\in A[x]$. Also, let $\p$ be a non-zero prime ideal of $A$, $v_\p$ the canonical $\p$-adic valuation and $K_\p$ the completion of $K$ at $\p$.

The Montes algorithm \cite{algorithm} computes an \emph{OM representation} of every prime ideal $\P$ of $B$ lying over $\p$ \cite{newapp}. An OM representation is a computational object supporting several data and operators, linked to one of the irreducible factors (say) $F(x)$ of $f(x)$ in $K_\p[x]$. Among these data, the OM representation contains the \emph{Okutsu invariants} of $F$, which reveal a lot of arithmetic information about the finite extension of $K_\p$ determined by $F$ \cite{Ok,okutsu}. 

In this paper, we present an algorithm to compute the $\P$-adic valuation of the different ideal $\df(L/K)$ in terms of the OM representation of $\P$. Of course, this determines the $\p$-adic valuation of the discriminant $\dsc(L/K)$. This computation does not require the computation of the discriminant $\dsc(f)$ of $f(x)$; actually, the algorithm provides the value of $v_\p(\dsc(f))$ as a by-product. We use this fact to derive a routine to compute $v_\p(\dsc(g))$ for an arbitrary polynomial $g(x)\in K[x]$. Also, we use similar ideas to compute the $\p$-adic valuation of the resultant $\res(g,h)$ of two polynomials $g(x),h(x)\in K[x]$. These routines may be useful in cases where the natural computation of $\dsc(g)$ or $\res(g,h)$ is inefficient because the polynomials have a large degree or very large coefficients. 

The algorithms are described in sections 2, 3. If $A/\p$ is small, the computation of $\delta:=v_\p(\dsc(g))$ requires $O\left(n^{2+\epsilon}+n^{1+\epsilon}\delta^{2+\epsilon}\right)$ word operations, where $n=\deg g$.
The computation of $v_\p(\res(g,h))$ has the same complexity, but taking $n=\max\{\deg g,\deg h\}$, $\delta=v_\p(\res(g,h))$. The algorithms have an excellent practical performance. In section 4 we present some numerical tests with Magma, in which we compare running times with the naive algorithms that compute first $\dsc(g)$ or $\res(g,h)$, and then its $\p$-valuation.  

The core of the paper is section 1, where we discuss the computation of the different ideal of an extension of local fields. Let $L_\P$ be the completion of $L$ at the prime ideal $\P$. Consider an embedding, $L\subset L_\P\hookrightarrow \overline{K}_\p$, of $L$ into a fixed algebraic closure of $K_\p$, and let $F(x)\in K_\p[x]$ be the monic irreducible factor of $f(x)$ that vanishes on the image of $\t$ in $\overline{K}_\p$, which we denote still by the same symbol $\t$. Let $e(F)=e(\P/\p)$ be the ramification index of $\P$ over $\p$, and  $f(F)=f(\P/\p)$ the residual degree. The different ideal $\df(L_\P/K_\p)$ is equal to the $(e(F)-1+\rho)$-th power of the maximal ideal of the ring of integers of $L_\P$, where $\rho$ is a non-negative integer that vanishes if and only if $\P$ is tamely ramified over $\p$. On the other hand, from Theorem \ref{vdisc} we deduce identities:
$$
v_\p(\dsc(F))=nv_\p(F'(\t))=n\mu(F)+f(F)\rho,
$$
for some invariant $\mu(F)\in\Q$. The three numbers $e(F)$, $f(F)$, $\mu(F)$ are Okutsu invariants of $F$, which can be expressed in terms of data contained in the OM representation of $F$ by a direct formula (Proposition \ref{okutsu}).
Therefore, for the computation of $\df(L_\P/K_\p)$ and $v_\p(\dsc(F))$, we need only to compute $v_\p(F'(\t))$.    
Since it is impossible in general to get an exact computation of the polynomial $F(x)$, in Theorem \ref{howmuch} we show how to construct a polynomial $\phi(x)\in A[x]$ which is sufficiently close to $F(x)$ to have $v_\p(\phi'(\t))=v_\p(F'(\t))$. Finally, once we get this sufficiently good approximation $\phi$ to $F$, the value of $v_\p(\phi'(\t))$ may be determined by using Newton polygons of higher order (Proposition \ref{vgt}), whose computation relies as well in data contained in the OM representation. 

\section{Computation of local different ideals}\label{secDiff}
Let $k$ be a local field, i.e. a locally compact and complete field with respect to a discrete valuation $v$. Let $\oo$ be the ring of integers of $k$, $\m$ the maximal ideal, and $\F=\oo/\m$ the residue field, which is a finite field. Let $p$ be the characteristic of $\F$. 

Let $\ks\subset \kb$ be the separable closure of $k$ inside a fixed algebraic closure. Let $v\colon \kb\to \Q\cup\{\infty\}$, be the canonical extension of the discrete valuation $v$ to $\kb$, normalized by $v(k)=\Z$.

\subsection{Okutsu invariants of an irreducible separable polynomial}\label{subsecOkutsu} Let $F(x)\in\oo[x]$ be a monic irreducible separable polynomial, $\t\in \ks$ a root of $F(x)$, and $L=k(\t)$ the finite separable extension of $k$ generated by $\t$. Denote $n:=[L\colon k]=\deg F$. Let $\oo_L$ be the ring of integers of $L$, $\m_L$ the maximal ideal and $\F_L$ the residue field. 
We indicate with a bar, $\raise.8ex\hbox{---}\colon \oo[x]\longrightarrow \F[x]$,
the canonical homomorphism of reduction of polynomials modulo $\m$. 

Let $[\phi_1,\dots,\phi_r]$ be an \emph{Okutsu frame} of $F(x)$, and let $\phi_{r+1}$ be an \emph{Okutsu approxi\-mation} to $F(x)$. That is, $\phi_1,\dots,\phi_{r+1}\in\oo[x]$ are monic separable polynomials of strictly increasing degree: $$1\le m_1:=\deg\phi_1<\cdots <m_r:=\deg\phi_r<m_{r+1}:=\deg\phi_{r+1}=n,$$ and for any monic polynomial $g(x)\in\oo[x]$ we have, for all $0\le i\le r$: 
$$
m_i\le \deg g<m_{i+1}\ \Longrightarrow\ \dfrac{v(g(\t))}{\deg g}\le\dfrac{v(\phi_i(\t))}{m_i}<\dfrac{v(\phi_{i+1}(\t))}{m_{i+1}},
$$
with the convention that $m_0=1$ and $\phi_0(x)=1$. It is easy to deduce from these conditions that the polynomials $\phi_1(x),\dots,\phi_{r+1}(x)$ are all irreducible in $\oo[x]$. 

The length $r$ of the frame is called the \emph{Okutsu depth} of $F(x)$. 
Okutsu frames were introduced by K. Okutsu in \cite{Ok} as a tool to construct integral bases. Okutsu approximations were introduced in \cite{okutsu}, where it is shown that the family $\phi_1,\dots,\phi_{r+1}$ determines an \emph{optimal $F$-complete type of order $r+1$}:
\begin{equation}\label{OM}
\ty_F=
(\psi_0;(\phi_1,\lambda_1,\psi_1);\cdots;(\phi_r,\lambda_r,\psi_r);(\phi_{r+1},\lambda_{r+1},\psi_{r+1})).
\end{equation}
In the special case $\phi_{r+1}=F$, we have $\la_{r+1}=-\infty$ and $\psi_{r+1}$ is not defined.  We call $\ty_F$ an \emph{OM representation} of $F$.

An OM representation of the polynomial $F$ carries (stores) several invariants and operators yielding strong arithmetic information about $F$ and the extension $L/k$.
Let us recall some of these invariants and operators.

Attached to the type $\ty_F$, there is a family of discrete valuations of the rational function field $k(x)$, the \emph{MacLane valuations}:
$$
v_i\colon k(x)\longrightarrow \Z\cup\{\infty\},\quad 1\le i\le r+1,
$$
such that $0=v_1(F)<\cdots<v_{r+1}(F)$. The $v_1$-value of a polynomial in $k[x]$ is the minimum of the $v$-values of its coefficients.

Also, $\ty_F$ determines a family of Newton polygon operators:
$$
N_i\colon k[x]\longrightarrow 2^{\R^2},\quad 1\le i\le r+1,
$$where $2^{\R^2}$ is the set of subsets of the Euclidean plane. Any non-zero polynomial $g(x)\in k[x]$ has a canonical $\phi_i$-development:
$$
g(x)=\sum\nolimits_{0\le s}a_s(x)\phi_i(x)^s,\quad \deg a_s<m_i,
$$ and the polygon $N_i(g)$ is the lower convex hull of the set of points $(s,v_i(a_s\phi_i^s))$. Usually, we are only interested in the principal polygon $N_i^-(g)\subset N_i(g)$ formed by the sides of negative slope. For all $1\le i\le r$, the Newton polygons $N_i(F)$ and $N_i(\phi_{i+1})$ are one-sided and they have the same slope, which is a negative rational number $\lambda_i\in\Q_{<0}$.  
The Newton polygon $N_{r+1}(F)$ is one-sided and it has an (extended) integer negative slope, which we denote by $\lambda_{r+1}\in\Z_{<0}\cup\{-\infty\}$.

There is a chain of finite extensions: $\F=\F_0\subset \F_1\subset\cdots\subset\F_{r+1}=\F_L$. The type $\ty_F$ stores monic irreducible polynomials $\psi_i(y)\in\F_i[y]$ such that $\F_{i+1}\simeq \F_i[y]/(\psi_i(y))$. We have $\psi_i(y)\ne y$, for all $i>0$. Finally, there are also \emph{residual polynomial} operators:
$$
R_i\colon k[x] \longrightarrow \F_i[y],\quad 0\le i\le r+1.
$$ 
For all $0\le i\le r$, we have $R_i(F)\sim \psi_i^{\omega_{i+1}}$ and $R_i(\phi_{i+1})\sim\psi_i$, where the symbol $\sim$ indicates that the polynomials coincide up to a multiplicative constant in $\F_i^*$. For $i=0$ we have $R_0(F)=\overline{F}=\psi_0^{\omega_1}$ and $R_0(\phi_1)=\overline{\phi_1}=\psi_0$. The exponents $\omega_{i+1}$ are all positive and $\omega_{r+1}=1$. The operator $R_{r+1}$ is defined only when $\phi_{r+1}\ne F$; in this case, we also have $R_{r+1}(F)\sim \psi_{r+1}$, with $\psi_{r+1}(y)\in\F_{r+1}[y]$ monic of degree one such that $\psi_{r+1}(y)\ne y$. 

We recall that the \emph{length} of a Newton polygon $N$ is the abscissa of its right end point; we denote it by $\ell(N)$. There is a strong link between the operators $R_i$ and $N_{i+1}^-$ \cite[Lem. 2.17,(2)]{HN}. 

\begin{lemma}\label{length}Let $0\le i\le r$.
For any non-zero polynomial $g(x)\in k[x]$, $\ell(N_{i+1}^-(g))$ is equal to the maximal exponent $\ord_{\psi_i}R_i(g)$ with which $\psi_i$ divides $R_i(g)$ in $\F_i[y]$. 
\end{lemma}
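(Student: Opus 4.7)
Plan. I would prove the identity by extracting both sides from the $\phi_{i+1}$-expansion $g(x)=\sum_{s\ge 0}b_s(x)\phi_{i+1}(x)^s$ of $g$ (with $\deg b_s<m_{i+1}$). By the very definition of $N_{i+1}$, the polygon $N_{i+1}(g)$ is the lower convex hull of the set $\{(s,v_{i+1}(b_s\phi_{i+1}^s)):b_s\ne 0\}$, and $\ell(N_{i+1}^-(g))$ is read off as the rightmost abscissa of a vertex sitting on a side of negative slope (or, conventionally, the leftmost vertex of $N_{i+1}(g)$ if no such side exists).

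For the right side I would combine three ingredients: the multiplicativity of $R_i$; the identity $R_i(\phi_{i+1})\sim\psi_i$ (which holds because $\phi_{i+1}$ lifts the factor $\psi_i$ of $R_i(F)$ at level $i$); and the fact that $v_{i+1}$ is the MacLane augmentation of $v_i$ along the slope $\lambda_i$ through $\phi_{i+1}$. Multiplicativity together with $R_i(\phi_{i+1})\sim\psi_i$ yields the piecewise identity
\[
R_i(b_s\phi_{i+1}^s)\ \sim\ R_i(b_s)\,\psi_i^s\qquad\text{in }\F_i[y].
\]
The MacLane augmentation rule then determines precisely which abscissae $s$ contribute $\phi_i$-monomials to the $\lambda_i$-side of $N_i(g)$: namely those for which the point $(s,v_{i+1}(b_s\phi_{i+1}^s))$ lies on $N_{i+1}^-(g)$. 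Assembling these observations one obtains a factorisation
\[
R_i(g)(y)\ \sim\ \psi_i(y)^{\,\ell(N_{i+1}^-(g))}\,Q(y)
\]
in $\F_i[y]$ with $Q$ coprime to $\psi_i$, which is the required equality.

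The main obstacle is the passage from the individual piecewise identities to the displayed factorisation, because $R_i$ is multiplicative but not additive: the residual contributions $R_i(b_s)\psi_i^s$ coming from different indices $s$ could a priori interfere at the $\phi_i$-level. The reason this obstacle can be overcome is that the MacLane augmentation of $v_i$ using $\lambda_i$ and $\phi_{i+1}$ is engineered precisely so that the $\lambda_i$-supported $\phi_i$-monomials of the various $b_s\phi_{i+1}^s$ reach the $\lambda_i$-side of $N_i(g)$ at separated abscissae indexed by the points on $N_{i+1}^-(g)$. Translating this separation into the residual coordinate $y$ — via the natural identification of the $\lambda_i$-side position with powers of $\psi_i$ — shows that the contributions assemble into a polynomial in $\psi_i$ whose $\psi_i$-adic valuation equals exactly the rightmost abscissa of $N_{i+1}^-(g)$, completing the proof.
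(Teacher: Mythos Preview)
The paper does not prove this lemma; it is quoted verbatim from \cite[Lem.~2.17(2)]{HN}. Your plan follows the route taken there, and the ingredients you name --- the $\phi_{i+1}$-expansion, multiplicativity of $R_i$, the relation $R_i(\phi_{i+1})\sim\psi_i$, and admissibility of the expansion --- are indeed the right ones.

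One point needs correction, however. The indices $s$ that contribute to $R_i(g)$ are \emph{not} those whose point $(s,v_{i+1}(b_s\phi_{i+1}^s))$ lies on $N_{i+1}^-(g)$; they are those attaining the minimal ordinate $v_{i+1}(g)=\min_t v_{i+1}(b_t\phi_{i+1}^t)$. Geometrically, these are the points at (and possibly to the right of) the \emph{right end} of $N_{i+1}^-(g)$, not the points along its sides. If your description were correct, the term with $s=0$ would contribute and the $\psi_i$-adic valuation of $R_i(g)$ would be $0$, contradicting your own conclusion.

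With the correct set of contributing indices, one further ingredient you left implicit becomes essential: since $\deg b_s<m_{i+1}=e_if_im_i$, one has $\deg_y R_i(b_s)<f_i=\deg\psi_i$, so $\psi_i\nmid R_i(b_s)$ whenever $b_s\ne 0$. Hence
\[
R_i(g)\ \sim\ \sum_{s\ge \ell} c_s\,R_i(b_s)\,\psi_i^{\,s},\qquad \ell=\ell(N_{i+1}^-(g)),
\]
is precisely the $\psi_i$-adic expansion of $R_i(g)$, and $\ord_{\psi_i}R_i(g)=\ell$ follows.
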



From all these data of the OM representation some more numerical invariants are deduced. Initially we take:
$$m_0:=1,\quad f_0:=\deg \psi_0,\quad e_0:=1,\quad h_0:=V_0:=\mu_0:=\nu_0=0.
$$Then we define for all $1\le i\le r+1$:
$$
\as{1.2}
\begin{array}{l}
h_i,\,e_i \ \mbox{ positive coprime integers such that }\la_i=-h_i/e_i,\\
f_i:=\deg \psi_i,\\
m_i:=\deg \phi_i=e_{i-1}f_{i-1}m_{i-1}=(e_0\,e_1\cdots e_{i-1})(f_0f_1\cdots f_{i-1}),\\
\mu_i:=\sum_{1\le j\le i}(e_jf_j\cdots e_if_i-1)h_j/(e_1\cdots e_j),\\
\nu_i:=\sum_{1\le j\le i}h_j/(e_1\cdots e_j),\\
V_i:=v_i(\phi_i)=e_{i-1}f_{i-1}(e_{i-1}V_{i-1}+h_{i-1})=(e_0\cdots e_{i-1})(\mu_{i-1}+\nu_{i-1}).
\end{array}
$$

The general definition of a type may be found in \cite[Sec. 2.1]{HN}. In later sections, we shall consider types which are not necessarily optimal nor $F$-complete. So, it may be convenient to distinguish these two properties among all features of a type that we have just mentioned.

\begin{definition}\label{optimal}Let  $\ty=(\psi_0;(\phi_1,\lambda_1,\psi_1);\cdots;(\phi_i,\lambda_i,\psi_i))$ be a type of order $i$. \medskip

\noindent{$\bullet$} We say that $\ty$ is \emph{optimal} if $m_1<\cdots<m_i$.
\medskip

\noindent{$\bullet$} We say that $\ty$ \emph{divides} the polynomial $g(x)\in K[x]$ (and we write $\ty\mid g$) if $\psi_i\mid R_i(g)$ in $\F_i[y]$. We say that $\ty$ is \emph{$g$-complete} if $\ord_{\psi_i}R_i(g)=1$.
\end{definition}

Thus, for a general type of order $i$ we have $m_1\mid\cdots\mid m_i$ and $\omega_i>0$, but not necessarily
$m_1<\cdots<m_i=\deg F$, and $\omega_i=1$. These were particular properties of our optimal and $F$-complete type $\ty_F$ of order $i=r+1$,
constructed from an Okutsu frame and an Okutsu approximation to $F$. 

An irreducible polynomial $F$ admits infinitely many different OM representations. However, the numeri\-cal invariants $e_i,f_i,h_i$, for $0\le i\le r$, and the MacLane valuations 
$v_1,\dots,v_{r+1}$ attached to $\ty_F$, are canonical invariants of $F$. 

The data $\lambda_{r+1},\psi_{r+1}$ are not invariants of $F$; they depend on the choice of the Okutsu approximation $\phi_{r+1}$. The integer slope $\lambda_{r+1}=-h_{r+1}$ measures how close is $\phi_{r+1}$ to $F$. We have $\phi_{r+1}=F$ if and only if $h_{r+1}=\infty$.

\begin{definition}
An \emph{Okutsu invariant} of $F(x)$ is a rational number that depends only on $e_1,\dots,e_r,f_1,\dots,f_r,h_1,\dots,h_r$.
\end{definition}

Several arithmetic invariants of the polynomial $F(x)$ and the field extension $L/k$ are Okutsu invariants of $F(x)$. For instance, let us define:
$$
\begin{array}{l}
e(F):=e(L/k),  \mbox{ the ramification index of }L/k,\\
f(F):=f(L/k), \mbox{ the residual degree of }L/k,\\
\mu(F):=\max\{v(g(\t))\mid g(x)\in\oo[x] \ \mbox{monic of degree less than }n\},\\
\ind(F):=\operatorname{length}_\oo(\oo_L/\oo[\t]),\quad \mbox{(length as an $\oo$-module)},\\
\mathfrak{f}(F):=\min\{\delta\in\Z_{\ge0}\mid \m_L^\delta \oo_L\subset \oo[\t]\},\\
\exp(F):=\min\{\delta\in\Z_{\ge0}\mid \m^\delta \oo_L\subset \oo[\t]\}.
\end{array}
$$

We abuse of language and say that $\ind(F)$ is the \emph{index} of $F(x)$, $\mathfrak{f}(F)$ is the \emph{conductor} of $F(x)$, and $\exp(F)$ is the \emph{exponent} of $F(x)$. 
Actually, $\mathfrak{f}(F)$ is the $\m_L$-adic exponent of the conductor of $\oo[\t]$ as an order of $\oo_L$, and $\exp(F)=\lceil \mathfrak{f}(F)/e(F)\rceil$. Also, if $k$ is a finite extension of the field $\Q_p$ of $p$-adic numbers, then $\ind(F)$ is related to the $v$-adic exponent of the finite index $(\oo_L\colon\oo[\t])$ of these abelian groups; more precisely, $\ind(F)=v((\oo_L\colon\oo[\t]))/[k\colon\Q_p]$.  

The next proposition gathers the results \cite[Cor. 3.8]{HN}, \cite[Thm. 5.2]{newapp} and \cite[Prop. 3.5]{GNP}. 

\begin{proposition}\label{okutsu}
$$\as{1.2}
\begin{array}{l}
e(F)=e_0\,e_1\cdots e_r,\\ f(F)=f_0f_1\cdots f_r,\\
\mu(F)=\mu_r=\sum_{1\le j\le r}(e_jf_j\cdots e_rf_r-1)h_j/(e_1\cdots e_j),\\
\exp(F)=\lfloor\mu_r\rfloor\\
\ind(F)=\dfrac{n}2\left(\mu_r-1+e(F)^{-1}\right).
\end{array}
$$
\end{proposition}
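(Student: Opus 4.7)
The five assertions in the proposition are somewhat independent, so the plan is to tackle them in turn, exploiting the chain of MacLane valuations $v_1,\ldots,v_{r+1}$ stored in $\ty_F$ and cross-referencing the three papers cited just before the statement.

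For $e(F)=e_0\cdots e_r$ and $f(F)=f_0\cdots f_r$: since $\ty_F$ is $F$-complete and optimal, the final MacLane valuation $v_{r+1}$, restricted to $k[x]/(F)\simeq L$ via $x\mapsto\t$, coincides with the canonical extension of $v$ to $L$. Each augmentation step $v_i\leadsto v_{i+1}$ multiplies the relative ramification index by $e_i$ (the denominator of $\la_i=-h_i/e_i$, which is coprime to $h_i$) and the relative residual degree by $f_i=\deg\psi_i$. Iterating from $v_1$, which already contributes $e_0=1$ and a residue extension of degree $f_0$, gives the two product formulas; this is exactly \cite[Cor.~3.8]{HN}.

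For $\mu(F)=\mu_r$: given a monic $g\in\oo[x]$ with $\deg g<n=m_{r+1}$, let $i$ be such that $m_i\le\deg g<m_{i+1}$. The Okutsu frame property bounds $v(g(\t))/\deg g\le v(\phi_i(\t))/m_i$; evaluating $v(\phi_i(\t))$ through the recursions defining $V_i$, $\mu_i$, $\nu_i$ and analysing the principal polygon $N_{i+1}^-(g)$ via Lemma~\ref{length} shows that the tightest bound occurs at $i=r$ and yields $\mu_r$, the maximum being attained by a monic polynomial of degree $m_r$ built from the frame. This reproves \cite[Thm.~5.2]{newapp}. Once $\mu(F)=\mu_r$ is in hand, the identity $\exp(F)=\lceil\mathfrak{f}(F)/e(F)\rceil$ together with the local description of the conductor of $\oo[\t]$ inside $\oo_L$ yields $\exp(F)=\lfloor\mu_r\rfloor$. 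The index formula $\ind(F)=\tfrac{n}{2}(\mu_r-1+e(F)^{-1})$ then follows from a lattice-point count under the Newton polygons of order $\le r+1$, carried out in \cite[Prop.~3.5]{GNP}: one decomposes $\oo_L/\oo[\t]$ into pieces indexed by the tower of MacLane valuations and applies a Pick-type calculation on each.

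The main obstacle I expect is the optimisation behind $\mu(F)=\mu_r$, since one has to rule out both that a higher-order correction to $\phi_r$ could strictly increase $v(\phi_r(\t))$ and that polynomials of degree strictly between $m_r$ and $n-1$ not coming from a refinement of the frame could beat $\mu_r$. Everything else reduces to careful bookkeeping on top of the MacLane/Okutsu formalism already recorded in the definition list preceding the proposition.
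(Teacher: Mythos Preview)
Your proposal is correct and matches the paper's approach exactly: the paper gives no proof of this proposition at all, merely stating that it ``gathers the results \cite[Cor.~3.8]{HN}, \cite[Thm.~5.2]{newapp} and \cite[Prop.~3.5]{GNP},'' which are precisely the three references you invoke for the respective items. Your additional sketches of the arguments behind those citations are a reasonable gloss, though note that your derivation of $\exp(F)=\lfloor\mu_r\rfloor$ via the conductor risks circularity in this paper's logical order, since the explicit conductor formula appears only \emph{after} Proposition~\ref{okutsu} here; in \cite{GNP} the exponent and index are obtained directly.
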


We end this section with a result extracted from \cite[Prop. 3.5,(5)]{HN}.

\begin{proposition}\label{vgt}
Let $g(x)\in\oo[x]$ a non-zero polynomial. For any $1\le i\le r$ take a line of slope $ \lambda_i$ far below $N_i(g)$, and shift it upwards till it touches the polygon for the first time. Let $(0,H)$ be the intersection point of this line with the vertical axis. Then,  
$$v(g(\t))\ge v_{i+1}(g)/(e_1\cdots e_i)=H/(e_0\,e_1\cdots e_{i-1}),$$ and equality holds if and only if $\psi_i\nmid R_i(g)$ in $\F_i[y]$.
\end{proposition}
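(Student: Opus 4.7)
The plan is to expand $g$ via its $\phi_i$-development, evaluate at $\t$, and apply the ultrametric inequality. The identity $v_{i+1}(g)/(e_1\cdots e_i) = H/(e_0 e_1\cdots e_{i-1})$ (noting that $e_0=1$) is a direct consequence of how the MacLane augmentation $v_{i+1}$ is computed from $v_i$ on $\phi_i$-developments: it reads $v_{i+1}(g) = e_i\min_s\{v_i(a_s\phi_i^s)-\lambda_i s\} = e_i H$. Hence the genuine content of the statement is the comparison with $v(g(\t))$ and the sharpness criterion.

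Write $g(x) = \sum_{s\ge 0} a_s(x)\phi_i(x)^s$ with $\deg a_s < m_i$. Evaluating at $\t$ and applying the ultrametric inequality gives
$$v(g(\t)) \ge \min_{s}\bigl\{v(a_s(\t)) + s\,v(\phi_i(\t))\bigr\}.$$
Two ingredients feed this bound. First, by induction on $i$ — the base case $i=1$ being immediate, since $v_1$ is the Gauss valuation and $\t$ is integral — one has $v(a_s(\t)) \ge v_i(a_s)/(e_1\cdots e_{i-1})$ for each $s$. Second, a standard consequence of the defining inequalities of the Okutsu frame (treated as a separate input, to break the circularity of applying the proposition to $\phi_i$ at the current level) yields the exact value $v(\phi_i(\t)) = (V_i + h_i/e_i)/(e_1\cdots e_{i-1})$. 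Substituting both estimates,
$$v(a_s(\t)) + s\,v(\phi_i(\t)) \ge \frac{v_i(a_s\phi_i^s) + s\,h_i/e_i}{e_1\cdots e_{i-1}},$$
and taking the minimum over $s$ yields $H/(e_1\cdots e_{i-1})$, since by construction $H = \min_s\{v_i(a_s\phi_i^s) - \lambda_i s\}$.

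For the equality statement, let $S$ be the set of indices $s$ attaining the minimum on the right; geometrically, $S$ consists of the abscissae of the integer points of $N_i(g)$ lying on the $\lambda_i$-tangent through $(0,H)$. Equality $v(g(\t)) = H/(e_0 e_1\cdots e_{i-1})$ holds iff the sum $\sum_{s\in S} a_s(\t)\phi_i(\t)^s$ does not lose valuation to cancellation, equivalently iff its leading reduction modulo $\m_L$, after normalization by the appropriate power of a uniformizer of $L$, is nonzero in $\F_L$. This leading reduction can be identified, via the construction of $R_i$ and the residue tower $\F \subset \F_1 \subset \cdots \subset \F_L$, with the value of $R_i(g)(y) \in \F_i[y]$ at a root of $\psi_i$ in $\F_{i+1} = \F_i[y]/(\psi_i(y))$. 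Hence equality holds iff $\psi_i \nmid R_i(g)$. I expect the main obstacle to be this final identification: it requires tracking how each $\phi_j(\t)$, suitably normalized, represents a root of $\psi_j$ in $\F_{j+1}$ for every level $j\le i$, and how these compatibilities combine to produce the residual polynomial evaluation — an argument that leans on Lemma~\ref{length} and the inductive tightness of the bound $v(a_s(\t)) \ge v_i(a_s)/(e_1\cdots e_{i-1})$ for the indices $s\in S$.
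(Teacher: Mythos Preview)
The paper does not prove this proposition; it is quoted verbatim from \cite[Prop.~3.5,(5)]{HN} without argument. Your sketch is exactly the standard proof carried out in that reference: expand $g$ in its $\phi_i$-development, combine the inductive hypothesis for the coefficients $a_s$ with the exact value of $v(\phi_i(\t))$ to obtain the lower bound, and then identify the obstruction to equality with the vanishing of $R_i(g)$ at a root of $\psi_i$.

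Two small points worth sharpening. First, for the equality case you need not merely $v(a_s(\t))\ge v_i(a_s)/(e_1\cdots e_{i-1})$ but \emph{equality} for every $s\in S$; otherwise the minimum on the right-hand side might not be attained and the reduction argument collapses. This equality follows from the inductive hypothesis together with Lemma~\ref{length}: since $\deg a_s<m_i$, the $\phi_i$-development of $a_s$ is trivial, so $\ell(N_i^-(a_s))=0$, hence $\psi_{i-1}\nmid R_{i-1}(a_s)$, and the level-$(i-1)$ statement gives the exact value. You allude to this at the end (``inductive tightness''), but it deserves to be stated up front rather than left implicit. Second, the identification you flag as the main obstacle is indeed where all the work lies; it is not a difficulty so much as the very definition of $R_i$ unpacked, and in \cite{HN} it is handled by constructing explicit rational functions $\gamma_i(\t)$ whose reductions generate $\F_{i+1}$ over $\F_i$ and satisfy $\psi_i(\overline{\gamma_i(\t)})=0$. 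Your description of the mechanism is accurate.
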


We emphasize two particular instances of this result, that occur quite often.  

\begin{corollary}\label{previous}With the above notation,
\begin{enumerate}
\item If $\deg g<m_{i+1}$, then $v(g(\t))=v_{i+1}(g)/(e_1\cdots e_i)$, for all $1\le i\le r$.
\item $v(\phi_i(\t))=(V_i+|\lambda_i|)/(e_0\,e_1\cdots e_{i-1})=\mu_{i-1}+\nu_i$, for all $1\le i\le r+1$.
\end{enumerate}
\end{corollary}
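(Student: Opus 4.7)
My plan is to deduce both parts of the corollary as direct applications of Proposition~\ref{vgt}, with a small extra step to handle the case $i=r+1$ in part~(2). For part~(1), I would invoke Proposition~\ref{vgt} and simply verify that its equality criterion $\psi_i\nmid R_i(g)$ is forced by the hypothesis $\deg g<m_{i+1}$. Since $m_{i+1}=e_if_im_i$, the length of $N_i(g)$ is at most $\lfloor\deg g/m_i\rfloor<e_if_i$, so the residual polynomial $R_i(g)$ arising from the shifted line of slope $\lambda_i$ has degree at most $\ell(N_i(g))/e_i<f_i=\deg\psi_i$. Since $R_i(g)\ne0$ (as $g\ne0$), $\psi_i$ cannot divide it, and the proposition delivers the claimed equality.

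For part~(2) in the range $1\le i\le r$, the $\phi_i$-development of $\phi_i$ itself is trivial, so $N_i(\phi_i)$ reduces to the single point $(1,V_i)$. Shifting the line of slope $\lambda_i$ upward to touch this point gives $y$-intercept $H=V_i+|\lambda_i|$, while $R_i(\phi_i)$ is a non-zero constant in $\F_i^*$, certainly not a multiple of $\psi_i$. Proposition~\ref{vgt} then yields $v(\phi_i(\t))=(V_i+|\lambda_i|)/(e_0\cdots e_{i-1})$. The remaining case $i=r+1$ is slightly different because Proposition~\ref{vgt} is stated only for $i\le r$: if $\phi_{r+1}=F$, both sides are $+\infty$; otherwise I would exploit $F(\t)=0$ to rewrite $v(\phi_{r+1}(\t))=v((\phi_{r+1}-F)(\t))$ and apply part~(1) with $i=r$, since $\deg(\phi_{r+1}-F)<m_{r+1}$. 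The polygon $N_{r+1}(F)$ is one-sided with vertices $(0,v_{r+1}(F-\phi_{r+1}))$ and $(1,V_{r+1})$ and slope $\lambda_{r+1}=-h_{r+1}$, so $v_{r+1}(\phi_{r+1}-F)=V_{r+1}+h_{r+1}$, producing the same expression.

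The remaining identity $(V_i+|\lambda_i|)/(e_0\cdots e_{i-1})=\mu_{i-1}+\nu_i$ is purely formal from the definitions listed earlier: the recursive formula for $V_i$ gives $V_i/(e_0\cdots e_{i-1})=\mu_{i-1}+\nu_{i-1}$, while $|\lambda_i|/(e_0\cdots e_{i-1})=h_i/(e_1\cdots e_i)=\nu_i-\nu_{i-1}$ accounts for the single term added in passing from $\nu_{i-1}$ to $\nu_i$. I expect the only non-routine step to be the $i=r+1$ case in part~(2); the subtraction trick above sidesteps the absence of a direct statement in Proposition~\ref{vgt} without needing any new machinery.
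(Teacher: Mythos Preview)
Your proposal is correct and matches the paper's approach: the paper offers no proof beyond declaring both items ``particular instances'' of Proposition~\ref{vgt}, and you supply exactly the verification needed, including the subtraction argument for $i=r+1$ in part~(2) that the paper glosses over. One small cleanup: rather than asserting a degree bound on $R_i(g)$ or that $R_i(\phi_i)$ is a constant (the latter is not literally true in the normalization of \cite{HN}), you can invoke Lemma~\ref{length} directly---since $\deg g<m_{i+1}$ forces $N_{i+1}(g)$ to reduce to a single point at abscissa~$0$, one gets $\ord_{\psi_i}R_i(g)=\ell(N_{i+1}^-(g))=0$, which yields $\psi_i\nmid R_i(g)$ without appealing to the internal structure of $R_i$.
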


\subsection{Different and discriminant}\label{subsecDiff}
Let $\df(L/k)$ be the different ideal of $L/k$ and $\dsc(L/k)=N_{L/k}(\df(L/k))$, the discriminant ideal of $L/k$. From now on we denote $$e:=e(F)=e(L/k),\qquad f:=f(F)=f(L/k).$$ The following identities are well-known:
\begin{equation}\label{known}
\as{1.3}
\begin{array}{l}
\dsc(F)=\res(F,F')=N_{L/k}(F'(\t)),\\
v(\dsc(F))=2\ind(F)+v_\m(\dsc(L/k)),\\
\df(L/k)=(\m_L)^{e-1+\rho},\quad \rho\ge0,
\end{array}
\end{equation}
and $\rho=0$ if and only if $L/k$ is tamely ramified.

Since the Okutsu invariants of $F$ can be computed from data contained in the OM representation of $F$, the only extra information we need to compute the different ideal is the value of the non-negative integer $\rho$, which is a measure of the wildness of the ramification. If $L/k$ is a Galois extension, then it is well-known how to deduce the value of $\rho$ from the higher ramification subgroups of the Galois group \cite{serre}. We are interested in the computation of $\rho$ for an arbitrary extension $L/k$. 

By applying the formula for $\ind(F)$ in Proposition \ref{okutsu}, we get 
a first formula for $\rho$ in terms of the Okutsu invariants $\mu(F)$, $f(F)$ and the $v$-value of $\dsc(F)$.

\begin{theorem}\label{vdisc}
$v(\dsc(F))=f(e\mu(F)+\rho)=n\mu(F)+f\rho$.
\end{theorem}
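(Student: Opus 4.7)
The statement is essentially an algebraic consequence of the three identities gathered in \eqref{known} together with the formula for $\ind(F)$ in Proposition \ref{okutsu}, so the plan is to simply chain these together.

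First I would compute the $\m$-adic valuation of the discriminant ideal. Since $\df(L/k)=\m_L^{e-1+\rho}$ and $\dsc(L/k)=N_{L/k}(\df(L/k))$, and since $N_{L/k}(\m_L)=\m^{f}$ (a standard property of the norm on a local extension with residue degree $f$), one obtains
\[
v_\m(\dsc(L/k))=f(e-1+\rho).
\]
Next I would plug this into the identity $v(\dsc(F))=2\ind(F)+v_\m(\dsc(L/k))$ from \eqref{known}, combined with the formula $\ind(F)=\frac{n}{2}\bigl(\mu(F)-1+e^{-1}\bigr)$ from Proposition \ref{okutsu}, yielding
\[
v(\dsc(F))=n\bigl(\mu(F)-1+e^{-1}\bigr)+f(e-1+\rho).
\]

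Finally I would simplify using $n=ef$, so that $n e^{-1}=f$ and $n=ef$ cancel neatly: $n\mu(F)-n+ne^{-1}+fe-f+f\rho=n\mu(F)-ef+f+ef-f+f\rho=n\mu(F)+f\rho$. Rewriting $n\mu(F)=ef\mu(F)$ gives the alternate form $f(e\mu(F)+\rho)$, establishing both equalities.

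There is essentially no obstacle here beyond correctly invoking the norm formula $N_{L/k}(\m_L)=\m^{f}$ for the local extension and being careful with the bookkeeping $n=ef$; the theorem is an immediate algebraic corollary of the already-cited identities.
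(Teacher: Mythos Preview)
Your proof is correct and follows exactly the approach the paper indicates: the paper states the theorem as an immediate consequence of the identities in \eqref{known} together with the formula for $\ind(F)$ in Proposition~\ref{okutsu}, and your argument spells out precisely that computation.
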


However, we are interested in a computation of $\rho$ that does not require the computation of $\dsc(F)$. This can be achieved as follows.

\begin{corollary}\label{rho}
$v_{r+1}(F')=v_{\m_L}(F'(\t))=e\mu(F)+\rho$.
\end{corollary}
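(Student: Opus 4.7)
The plan is to establish the two equalities separately, each of which is a short deduction from results already available in the excerpt.

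For the first equality $v_{r+1}(F')=v_{\m_L}(F'(\theta))$, I would apply Corollary \ref{previous}(1) to the polynomial $g=F'$ with index $i=r$. Since $\deg F' = n-1 < n = m_{r+1}$, the hypothesis is satisfied and we obtain
\[
v(F'(\theta)) \;=\; \frac{v_{r+1}(F')}{e_1\cdots e_r}.
\]
Because $v$ is normalized so that $v(k)=\Z$, its extension to $L$ takes values in $\frac{1}{e}\Z$, and the normalized valuation on $L$ satisfies $v_{\m_L}=e\cdot v$ on $L$. Using $e=e_0e_1\cdots e_r$ together with $e_0=1$ (from the initial setup of the numerical invariants), multiplying by $e$ gives $v_{\m_L}(F'(\theta))=v_{r+1}(F')$.

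For the second equality $v_{\m_L}(F'(\theta))=e\mu(F)+\rho$, I would use the norm formula $\dsc(F)=N_{L/k}(F'(\theta))$ from \eqref{known}. Since the Galois conjugates of any element of $L$ share the same $v$-value, one has $v(N_{L/k}(\alpha))=n\cdot v(\alpha)=ef\cdot v(\alpha)=f\cdot v_{\m_L}(\alpha)$ for every $\alpha\in L^{*}$. Applied to $\alpha=F'(\theta)$, this gives $v(\dsc(F))=f\cdot v_{\m_L}(F'(\theta))$. Comparing with Theorem \ref{vdisc}, which says $v(\dsc(F))=f(e\mu(F)+\rho)$, and dividing both sides by $f$, we obtain $v_{\m_L}(F'(\theta))=e\mu(F)+\rho$, as required.

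No real obstacle is expected here: both steps are essentially normalizations, and everything needed (the value of $v(g(\theta))$ in terms of $v_{r+1}(g)$ for $\deg g<m_{r+1}$, the formula $\dsc(F)=N_{L/k}(F'(\theta))$, and Theorem \ref{vdisc}) is already in place. The only thing to be a bit careful about is keeping track of the normalizations: $v$ on $\overline{k}$ is $\Z$-valued on $k$, while $v_{\m_L}$ is the normalized valuation on $L$, so the factor $e$ enters in the first equality and the factor $f$ enters through the norm in the second equality, and together they account for the factor $n=ef$ appearing in Theorem \ref{vdisc}.
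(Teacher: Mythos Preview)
Your proposal is correct and follows essentially the same route as the paper's proof: apply Corollary~\ref{previous}(1) to $F'$ (using $\deg F'<m_{r+1}$) for the first equality, then combine the norm identity $\dsc(F)=N_{L/k}(F'(\theta))$ from \eqref{known} with Theorem~\ref{vdisc} and divide by $f$ for the second. The only difference is that you spell out the normalization $v_{\m_L}=e\cdot v$ and the identity $v(N_{L/k}(\alpha))=n\,v(\alpha)$ more explicitly than the paper does.
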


\begin{proof}
Since $\deg F'<n=m_{r+1}$, item 1 of Corollary \ref{previous} and Proposition \ref{okutsu} show that $v(F'(\t))=v_{r+1}(F')/e$. Hence, by (\ref{known}) and Theorem \ref{vdisc},
$$
nv(F'(\t))=v(\dsc(F))=n\mu(F)+f\rho.
$$
If we divide both terms of the equality by $f$, we get the result.
\end{proof}

As a consequence, the conductor of $F(x)$ is an Okutsu invariant too. In fact, it is well-known that \cite[Ch.III,\S6,Cor.2]{serre}
$$
F'(\t)\oo_L=(\m_L)^{\mathfrak{f}(F)}\df(L/k).
$$
On the other hand, Corollary \ref{rho} shows that
$$
v_{\m_L}(F'(\t))=e\mu(F)+\rho=e\mu(F)+v_{\m_L}(\df(L/k))-e+1.
$$
Thus, by comparing the two identities we get an expression of $\mathfrak{f}(F)$ as an Okutsu invariant.

\begin{corollary}
 $\mathfrak{f}(F)=e\mu(F)-e+1=2\ind(F)/f$.
\end{corollary}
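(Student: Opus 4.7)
The plan is to read off the first equality directly from the two identities displayed immediately before the corollary, and then obtain the second equality by plugging in the formula for $\ind(F)$ from Proposition \ref{okutsu}. Most of the ingredients are already assembled, so the proof is essentially a matter of comparing two expressions for $v_{\m_L}(F'(\t))$.

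For the equality $\mathfrak{f}(F)=e\mu(F)-e+1$, I would proceed as follows. First, apply $v_{\m_L}$ to the well-known identity $F'(\t)\oo_L=(\m_L)^{\mathfrak{f}(F)}\df(L/k)$ to get
\[
v_{\m_L}(F'(\t))=\mathfrak{f}(F)+v_{\m_L}(\df(L/k)).
\]
Next, invoke the third line of (\ref{known}), which gives $v_{\m_L}(\df(L/k))=e-1+\rho$. On the other hand, Corollary \ref{rho} yields $v_{\m_L}(F'(\t))=e\mu(F)+\rho$. Equating the two expressions for $v_{\m_L}(F'(\t))$ and cancelling $\rho$ produces $\mathfrak{f}(F)=e\mu(F)-e+1$, showing in particular that the conductor no longer depends on $\rho$ and is therefore an Okutsu invariant.

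For the second equality $\mathfrak{f}(F)=2\ind(F)/f$, I would substitute the formula $\ind(F)=(n/2)(\mu(F)-1+e^{-1})$ of Proposition \ref{okutsu} into $2\ind(F)/f$. Using $n=ef$, this becomes $e(\mu(F)-1+e^{-1})=e\mu(F)-e+1$, which matches the value of $\mathfrak{f}(F)$ just computed.

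There is essentially no obstacle: the only mild point of care is ensuring that the $\rho$ appearing in Corollary \ref{rho} and the $\rho$ appearing in (\ref{known}) are the same integer, which is clear from the setup in \S\ref{subsecDiff}. The argument is a one-line comparison once those inputs are cited correctly.
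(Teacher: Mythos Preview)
Your proposal is correct and follows essentially the same approach as the paper: the first equality is obtained exactly as in the text preceding the corollary, by comparing the identity $F'(\t)\oo_L=(\m_L)^{\mathfrak{f}(F)}\df(L/k)$ with Corollary~\ref{rho} and the formula $v_{\m_L}(\df(L/k))=e-1+\rho$ from (\ref{known}), and the second equality is the straightforward substitution of the index formula from Proposition~\ref{okutsu} together with $n=ef$.
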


In practice, we deal with some polynomial $f(x)$ with coefficients in a Dedekind domain $A$, and $F(x)$ is an irreducible factor of $f(x)$ over the completion of $A$ at a nonzero prime ideal $\p$ of $A$. Thus, we never get an exact computation of the polynomial $F(x)$, but only an approximation to it. Therefore, if $L/k$ is wildly ramified and we want to apply Corollary \ref{rho} to compute $\rho$, we need to find a polynomial $\phi(x)\in A[x]$, sufficiently close to $F(x)$ to have $v_{r+1}(\phi')=v_{r+1}(F')$ and $\mu(\phi)=\mu(F)$. To this end is devoted the next section. 

\subsection{Okutsu approximations and the different}\label{subsecApprox}
Suppose $L/k$ is wildly ramified. Let $\phi(x)\in\oo[x]$ be an Okutsu approximation to $F(x)$. That is, $\phi$ is a monic separable polynomial of degree $n$ such that $v(\phi(\t))/n>v(\phi_r(\t))/m_r$, where $r$ is the Okutsu depth of $F(x)$ and $[\phi_1,\dots,\phi_r]$ is some Okutsu frame of $F(x)$. This polynomial $\phi(x)$ is always irreducible and it has the same Okutsu invariants as $F(x)$ \cite[Lem. 4.3]{okutsu}. In particular, Proposition \ref{okutsu} shows that \begin{equation}\label{equal}
e(\phi)=e(F),\quad f(\phi)=f(F),\quad\mu(\phi)=\mu(F).
\end{equation}

However, the non-negative integer $\rho(F):=v_{r+1}(F')-e(F)\mu(F)$ is not an Okutsu invariant of $F(x)$, and we may have $v_{r+1}(\phi')\ne v_{r+1}(F')$. The aim of this section is to find how close to $F$ must be taken the approximation $\phi$, to ensure that $v_{r+1}(\phi')=v_{r+1}(F')$. By Corollary \ref{rho} and (\ref{equal}), this implies $\rho(\phi)=\rho(F)$; thus, the polynomial $\phi$ may be used to determine the value of $\rho(F)$ and hence the different ideal of $L/k$.

For any Okutsu approximation $\phi$, we may apply the results of section \ref{subsecOkutsu} to $\phi_{r+1}:=\phi$.
Since $\deg\phi=\deg F$ and both polynomials are monic, the difference $a(x):=F(x)-\phi(x)$ has degree less than $n=m_{r+1}$. Hence, $F(x)=a(x)+\phi(x)$ is the canonical $\phi$-development of $F$, and the right end point of $N_{r+1}(F)$ is $(1,V_{r+1})$. 

Since $\ord_{\psi_r}R_r(F)=\omega_{r+1}=1$, Lemma \ref{length} shows that $\ell(N_{r+1}^-(F))=1$. Hence, $N_{r+1}^-(F)=N_{r+1}(F)$ is one-sided of slope $\lambda_{r+1}=-h_\phi$, for some positive integer $h_\phi:=v_{r+1}(a)-V_{r+1}$ (see Figure \ref{fignewton}). In particular, $h_{r+1}=h_\phi$, $e_{r+1}=1$, $f_{r+1}=1$.

\begin{figure}\caption{Newton polygon $N_{r+1}(F)$}\label{fignewton}
\setlength{\unitlength}{5mm}
\begin{picture}(13,9)
\put(4.85,7.3){$\bullet$}\put(6.85,3.3){$\bullet$}
\put(4,1){\line(1,0){6}}\put(5,0){\line(0,1){8.5}}
\put(5,7.5){\line(1,-2){2}}\put(5.02,7.5){\line(1,-2){2}}
\multiput(7,0.9)(0,.25){11}{\vrule height2pt}
\multiput(4.9,3.5)(.25,0){9}{\hbox to 2pt{\hrulefill }}
\put(6.1,5.5){\begin{footnotesize}$-h_\phi$\end{footnotesize}}
\put(2.6,7.3){\begin{footnotesize}$v_{r+1}(a)$\end{footnotesize}}
\put(3.5,3.3){\begin{footnotesize}$V_{r+1}$\end{footnotesize}}
\put(6.9,.2){\begin{footnotesize}$1$\end{footnotesize}}
\put(4.6,.2){\begin{footnotesize}$0$\end{footnotesize}}
\end{picture}
\end{figure}
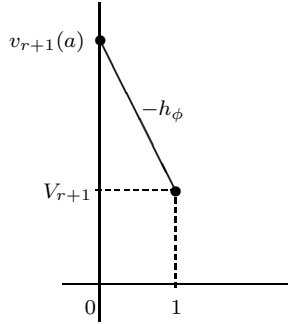

Clearly, $v(\phi(\t))$ is a measure of the quality of $\phi$ as an approximation to $F$. By item 2 of Corollary \ref{previous}, we have
$$
v(\phi(\t))=(V_{r+1}+h_\phi)/e=\mu_r+\nu_{r+1}.
$$
Since $V_{r+1}$ and $e$ are constant, the positive integer $h_\phi$ is a measure of the quality of the approximation too. The main result in this section
(Theorem \ref{howmuch}) finds a sufficient condition to ensure that $\phi$ is sufficiently close to $F$ to have the same exponent of the different ideal. 
Before proving this theorem we need a lemma, inspired by \cite{ore}.

\begin{lemma}\label{ore}
Let $F(x)\in\oo[x]$ be a monic irreducible separable polynomial, $\ty_F$ an OM representation of $F$ as in (\ref{OM}), and $\t\in \ks$ a root of $F$. 

Let $g(x)\in\oo[x]$ be a polynomial satisfying $\deg g<m_{i+1}$, for some $0\le i\le r$. Then,
$v(g'(\t))\ge v(g(\t))-\nu_i$. If $p\mid e_0\,e_1\cdots e_{i-1}$, then this inequality is strict.
\end{lemma}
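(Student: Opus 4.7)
The plan is to proceed by induction on $i$, supported by a parallel auxiliary bound on $v(\phi_i'(\t))$ established by the same induction. For the base case $i=0$, the required inequality $v(g'(\t))\ge v(g(\t))$ (when $\deg g<m_1$) is trivial if $m_1=1$, since $g$ is then constant. Otherwise $\psi_0$ has degree $m_1>1$, so $\psi_0\ne y$, forcing $\overline{\t}$ (a root of $\overline{F}=\psi_0^{\omega_1}$) to be nonzero, i.e.\ $v(\t)=0$. Writing $g=\sum_j a_jx^j$ and using that $\psi_0$ is the minimal polynomial of $\overline{\t}$ over $\F$, one checks $v(g(\t))=v_1(g)=\min_j v(a_j)$, while $v(g'(\t))\ge v_1(g')\ge v_1(g)$. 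The strict-inequality clause is vacuous because $p\nmid 1$.

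For the inductive step, assume the lemma for indices less than $i$ and let $\deg g<m_{i+1}$. I would take the canonical $\phi_i$-development $g=\sum_{s=0}^{N}a_s\phi_i^s$ with $\deg a_s<m_i$ and $N<e_if_i$. Corollary \ref{previous}(1), applied to $g$ via $v_{i+1}$ and to each $a_s$ via $v_i$, yields the identity $v(g(\t))=\min_s(v(a_s(\t))+s\,v(\phi_i(\t)))$. Differentiation and evaluation at $\t$ give
\[g'(\t)=\sum_s a_s'(\t)\phi_i(\t)^s+\phi_i'(\t)\sum_{s\ge 1}s\,a_s(\t)\phi_i(\t)^{s-1}.\]
The inductive hypothesis applied to each $a_s$ supplies $v(a_s'(\t))\ge v(a_s(\t))-\nu_{i-1}>v(a_s(\t))-\nu_i$, which dispatches the first sum. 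The second sum rests on the key auxiliary bound $v(\phi_i'(\t))\ge\mu_{i-1}$ (strict when $p\mid e_0e_1\cdots e_{i-1}$); combining it with $v(\phi_i(\t))=\mu_{i-1}+\nu_i$ from Corollary \ref{previous}(2), each term contributes at least $v(s)+v(a_s(\t))+s\,v(\phi_i(\t))-\nu_i\ge v(g(\t))-\nu_i$.

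To establish the auxiliary bound I would expand $\phi_i$ in its $\phi_{i-1}$-development: since $\phi_i$ is monic of degree $e_{i-1}f_{i-1}m_{i-1}$, one writes $\phi_i=\phi_{i-1}^{e_{i-1}f_{i-1}}+b$ with $\deg b<m_i$, and then
\[\phi_i'(\t)=e_{i-1}f_{i-1}\phi_{i-1}(\t)^{e_{i-1}f_{i-1}-1}\phi_{i-1}'(\t)+b'(\t).\]
The first summand is bounded below by $\mu_{i-1}+v(e_{i-1}f_{i-1})$, using the inductive auxiliary bound $v(\phi_{i-1}'(\t))\ge\mu_{i-2}$, the identity $v(\phi_{i-1}(\t))=\mu_{i-2}+\nu_{i-1}$, and the arithmetic identity $\mu_{i-1}=e_{i-1}f_{i-1}\mu_{i-2}+(e_{i-1}f_{i-1}-1)\nu_{i-1}$. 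For the second summand, the fact that $N_{i-1}(\phi_i)$ is one-sided of slope $\lambda_{i-1}$ pins down $v(b(\t))=e_{i-1}f_{i-1}(\mu_{i-2}+\nu_{i-1})$, and the outer inductive hypothesis applied to $b$ gives $v(b'(\t))\ge v(b(\t))-\nu_{i-1}=\mu_{i-1}$. The strict clause is produced either from $v(e_{i-1})>0$ when $p\mid e_{i-1}$, or inductively from the strict $v(\phi_{i-1}'(\t))>\mu_{i-2}$ when $p\mid e_j$ for some $j<i-1$.

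The main obstacle is the combinatorial bookkeeping required to propagate the strict-inequality clause through this nested induction: in each subcase of $p\mid e_0 e_1\cdots e_{i-1}$ one must arrange that the strict gain appears in at least one of the two summands of $\phi_i'(\t)$, and verify that this gain survives the aggregation across $s$ in the $\phi_i$-development, even when the minimizing index happens to satisfy $p\nmid s$.
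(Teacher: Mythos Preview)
Your inductive skeleton---the $\phi_i$-development of $g$, splitting $g'(\t)$ into the $a_s'$-terms and the $\phi_i'$-terms---is exactly the paper's. The substantive difference is in how you obtain the auxiliary bound on $v(\phi_i'(\t))$. The paper does not prove it by a nested induction; instead it notes that $[\phi_1,\dots,\phi_{i-1}]$ is an Okutsu frame of $\phi_i$ and invokes Corollary~\ref{rho} to get the \emph{exact} value
\[
v(\phi_i'(\t))=\mu_{i-1}+\rho(\phi_i)/(e_0\cdots e_{i-1}).
\]
Since $\rho(\phi_i)>0$ precisely when $p\mid e(\phi_i)=e_0\cdots e_{i-1}$ (the classical fact recorded in (\ref{known})), the strict clause comes for free, and the rest of the argument proceeds termwise just as you outline.

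Your elementary route to the auxiliary bound has a real gap in the strict clause. When $p\mid e_{i-1}$ but $p\nmid e_0\cdots e_{i-2}$, the first summand $e_{i-1}f_{i-1}\,\phi_{i-1}(\t)^{e_{i-1}f_{i-1}-1}\phi_{i-1}'(\t)$ does pick up $v(e_{i-1})>0$, but for $b'(\t)$ your outer inductive hypothesis at level $i-1$ only yields $v(b'(\t))\ge v(b(\t))-\nu_{i-1}=\mu_{i-1}$, with no strictness. One summand being strictly above $\mu_{i-1}$ is not enough: since $v(A+B)\ge\min\{v(A),v(B)\}$, you need \emph{both} strictly above $\mu_{i-1}$ to conclude $v(\phi_i'(\t))>\mu_{i-1}$. (Your final paragraph reads as if one suffices; that is the error.) The hole can be patched without Corollary~\ref{rho}, but it requires going inside the level-$(i-1)$ argument rather than using it as a black box: in the $\phi_{i-1}$-development of $b$, the integer points lying on the single side of $N_{i-1}(\phi_i)$, of slope $-h_{i-1}/e_{i-1}$ with $\gcd(h_{i-1},e_{i-1})=1$, must satisfy $e_{i-1}\mid s$; hence when $p\mid e_{i-1}$ every minimizing index $s\ge1$ in the expansion of $b'(\t)$ carries $v(s)>0$, forcing $v(b'(\t))>\mu_{i-1}$. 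That is precisely the bookkeeping you flagged as the obstacle, and the paper's appeal to Corollary~\ref{rho} is what sidesteps it.
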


\begin{proof}Let us prove the lemma by induction on $i$.
Suppose $i=0$. Recall that $\nu_0=0$ and $\overline{F}=(\psi_0)^{\omega_1}$, where $\psi_0$ is irreducible of degree $f_0=m_1$. Let $\pi\in\m$ be a uniformizer, and express $g(x)=\pi^aG(x)$, with $\overline{G}\ne0$.  Since $\deg\overline{G}\le \deg g<m_1=\deg\psi_0$, we have $\psi_0\nmid\overline{G}$, and this implies $v(G(\t))=0$. Since $g'(x)=\pi^aG'(x)$, we have then $v(g'(\t))\ge a=v(g(\t))$. For $i=0$, the empty product $e_0\,e_1\cdots e_{i-1}$ is equal to one; hence, the condition $p\mid e_0\,e_1\cdots e_{i-1}$ is never satisfied.

Let $i\ge1$, and suppose the statement of the lemma is true for all polynomials of degree less than $m_i$. Assume $m_i\le \deg g<m_{i+1}$, and let $$g(x)=\sum\nolimits_{0\le s}a_s(x)\phi_i(x)^s,\quad \deg a_s<m_i$$ be the canonical $\phi_i$-development of $g(x)$. 

For any $s\ge0$, let $(0,H_s)$ be the intersection point of the vertical axis with the line of slope $\lambda_i$ passing through $(s,v_i(a_s\phi_i^s))$. Clearly, $\deg a_s(\phi_i)^s\le \deg g<m_{i+1}$; hence, Proposition \ref{vgt} and Corollary \ref{previous} show that
$$v(a_s(\t)\phi_i(\t)^s)=H_s/(e_0\,e_1\cdots e_{i-1}),
$$
and $v(g(\t))=v_{i+1}(g)/e=H/(e_0\,e_1\cdots e_{i-1})$, where $(0,H)$ is the intersection point of the vertical axis with the line of slope $\lambda_i$ that first touches $N_i(g)$ from below (see Figure \ref{figH}). Therefore,
\begin{equation}\label{min}
v(g(\t))=\dfrac{H}{e_0\,e_1\cdots e_{i-1}}=\min_{0\le s}\left\{\dfrac{H_s}{e_0\,e_1\cdots e_{i-1}}\right\}=\min_{0\le s}\left\{v(a_s(\t)\phi_i(\t)^s)\right\}.
\end{equation}

\begin{figure}\caption{Computation of $v_{i+1}$}\label{figH}
\setlength{\unitlength}{5mm}
\begin{picture}(10,6.5)
\put(5.85,2.55){$\bullet$}\put(4.85,3.55){$\bullet$}
\put(0,0.7){\line(1,0){10}}
\put(6,2.7){\line(-1,1){1}}\put(6.02,2.7){\line(-1,1){1}}
\put(6,2.7){\line(3,-1){1}}\put(6.02,2.7){\line(3,-1){1}}
\put(5,3.7){\line(-1,2){1}}\put(5.02,3.7){\line(-1,2){1}}
\put(9,1.2){\line(-2,1){8}}
\put(.9,5.2){\line(1,0){.2}}
\put(1,-.3){\line(0,1){6.5}}
\put(5.9,.1){\begin{footnotesize}$s$\end{footnotesize}}
\put(-1.6,2.6){\begin{footnotesize}$v_i(a_s\phi_i^s)$\end{footnotesize}}
\put(5.6,3.7){\begin{footnotesize}$N_i^-(g)$\end{footnotesize}}
\put(8.5,1.6){\begin{footnotesize}$\lambda_i$\end{footnotesize}}
\put(-3.4,5.05){\begin{footnotesize}$H=v_{i+1}(g)/e_i$\end{footnotesize}}
\multiput(6.05,.6)(0,.25){9}{\vrule height2pt}
\multiput(.8,2.75)(.25,0){21}{\hbox to 2pt{\hrulefill }}
\end{picture}
\end{figure}
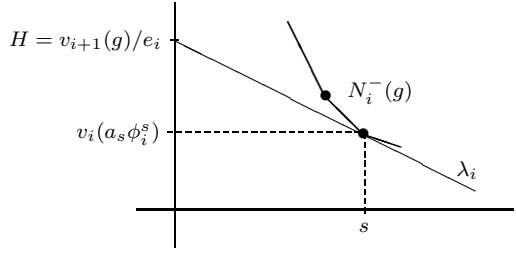

Let us write:
$$
\phi_i(x)\,g'(x)=\sum\nolimits_{0\le s\le m}b_s(x)\,\phi_i(x)^s,\quad b_s(x)=a_s'(x)\phi_i(x)+sa_s(x)\phi_i'(x). 
$$
Note that this is not the canonical $\phi_i$-development of $\phi_i(x)\,g'(x)$.
We claim that 
\begin{equation}\label{ba}
v(b_s(\t))\ge v(a_s(\t))+\mu_{i-1}, \quad \mbox{ for all }s\ge0,
\end{equation}
and if $p\mid e_0\,e_1\cdots e_{i-1}$, then the inequality is strict.

In fact, let us check that both summands of $b_s(\t)=a'_s(\t)\phi_i(\t)+sa_s(\t)\phi'_i(\t)$ satisfy this inequality. By the induction hypothesis and Corollary \ref{previous},(2),   
$$
v(a'_s(\t)\phi_i(\t))\ge v(a_s(\t))-\nu_{i-1}+\mu_{i-1}+\nu_i> v(a_s(\t))+\mu_{i-1}.
$$
On the other hand, $[\phi_1,\dots,\phi_{i-1}]$ is an Okutsu frame of $\phi_i(x)$
\cite[Cor. 2.5]{okutsu}, and Corollary \ref{rho} shows that $v_i(\phi_i')=e_0\,e_1\cdots e_{i-1}\mu_{i-1}+\rho(\phi_i)$. Since $\deg\phi'_i<m_i$, Corollary \ref{previous} shows that $v(\phi'_i(\t))=\mu_{i-1}+\rho(\phi_i)/(e_0\,e_1\cdots e_{i-1})$. Hence,
$$
v(sa_s(\t)\phi_i'(\t))\ge v(a_s(\t))+\mu_{i-1}+\rho(\phi_i)/(e_0\,e_1\cdots e_{i-1})\ge v(a_s(\t))+\mu_{i-1}, 
$$
and the inequality is strict if $p\mid e_0\,e_1\cdots e_{i-1}=e(\phi_i)$. This ends the proof of (\ref{ba}).

Now, from (\ref{ba}) and (\ref{min}), we get
\begin{align*}
v(\phi_i(\t)g'(\t))\ge&\ \min_{0\le s}\left\{v(b_s(\t)\phi_i(\t)^s)\right\}\\\ge&\
\min_{0\le s}\left\{v(a_s(\t)\phi_i(\t)^s)\right\}+\mu_{i-1}=v(g(\t))+\mu_{i-1}, 
\end{align*}
and the inequality is strict if $p\mid e_0\,e_1\cdots e_{i-1}$. This ends the proof of the lemma because $v(\phi_i(\t))=\mu_{i-1}+\nu_i$, by Corollary \ref{previous}.
\end{proof}

\begin{theorem}\label{howmuch}
Let $\phi$ be an Okutsu approximation to $F$, with $h_\phi>\rho(F)$. Then, $\rho(\phi)=\rho(F)$.
In particular, if $L'/k$ is the extension determined by $\phi$, then $\df(L/k)$ and $\df(L'/k)$ are divided by the same power of $\m_L$ and $\m_{L'}$,  respectively. 
\end{theorem}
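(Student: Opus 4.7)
The plan is to write $F = \phi + a$ with $a := F - \phi$ of degree less than $n = m_{r+1}$, and to show that $v_{r+1}(a') > v_{r+1}(F')$. Since $\phi' = F' - a'$ and $v_{r+1}$ is a discrete valuation on $k[x]$, this at once forces $v_{r+1}(\phi') = v_{r+1}(F')$. Applying Corollary \ref{rho} to both $F$ and $\phi$ --- noting that their OM representations share the first $r$ pairs $(\phi_i,\lambda_i)$, and hence the same MacLane valuation $v_{r+1}$, because these valuations depend only on $\phi_1,\dots,\phi_r,\lambda_1,\dots,\lambda_r$ --- together with $e(\phi)=e(F)=e$ and $\mu(\phi)=\mu(F)$ from (\ref{equal}), one deduces
$$\rho(\phi) = v_{r+1}(\phi') - e\mu(\phi) = v_{r+1}(F') - e\mu(F) = \rho(F).$$
The claim about different ideals then follows directly from $\df(L/k)=\mathfrak{m}_L^{e-1+\rho}$ in (\ref{known}), applied to both $L$ and $L'$.

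To establish the key inequality $v_{r+1}(a') > v_{r+1}(F')$, I would proceed as follows. As already observed in the discussion preceding Figure \ref{fignewton}, the $\phi$-development $F = a + \phi$ exhibits the Newton polygon $N_{r+1}(F)$ with right endpoint $(1,V_{r+1})$ and slope $-h_\phi$, so $v_{r+1}(a) = V_{r+1} + h_\phi$. Since $\deg a < m_{r+1}$, Corollary \ref{previous}(1) translates this into $v(a(\t)) = \mu_r + h_\phi/e$. Applying Lemma \ref{ore} to $a$ at level $i = r$ then yields
$$v(a'(\t)) \ge v(a(\t)) - \nu_r = \mu_r + h_\phi/e,$$
and, since $\deg a' < m_{r+1}$ as well, a second invocation of Corollary \ref{previous}(1) converts this into $v_{r+1}(a') = e\cdot v(a'(\t)) \ge e\mu_r + h_\phi$. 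Meanwhile, Corollary \ref{rho} and Proposition \ref{okutsu} give $v_{r+1}(F') = e\mu(F) + \rho(F) = e\mu_r + \rho(F)$, so the hypothesis $h_\phi > \rho(F)$ supplies exactly the desired strict inequality.

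The only genuinely delicate point is that Lemma \ref{ore} provides only a non-strict bound on $v(a'(\t))$: the strict comparison $v_{r+1}(a') > v_{r+1}(F')$ must therefore be extracted entirely from the gap $h_\phi > \rho(F)$. Once this gap is matched against the lower bound $e\mu_r + h_\phi$ for $v_{r+1}(a')$ and the exact value $e\mu_r + \rho(F)$ for $v_{r+1}(F')$, the argument collapses to a short chain of (in)equalities. No further case split (e.g.\ tame versus wild) is required, since in the tame case $\rho(F) = 0$ and the hypothesis $h_\phi \ge 1$ already gives what we need.
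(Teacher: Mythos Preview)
Your argument is essentially identical to the paper's own proof: decompose $F=\phi+a$, bound $v(a'(\t))$ via Lemma~\ref{ore}, and compare with $v(F'(\t))=\mu_r+\rho(F)/e$ from Corollary~\ref{rho}. There is one arithmetic slip: you state $v(a(\t))=\mu_r+h_\phi/e$, but in fact $v(a(\t))=(V_{r+1}+h_\phi)/e=\mu_r+\nu_r+h_\phi/e$ (equivalently $\mu_r+\nu_{r+1}$, as the paper writes it); your displayed bound $v(a'(\t))\ge v(a(\t))-\nu_r=\mu_r+h_\phi/e$ is nonetheless correct, so the proof goes through.
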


\begin{proof}
Let $F(x)=a(x)+\phi(x)$, with $a(x)\in\oo[x]$ of degree less than $n=m_{r+1}$. Clearly, $v(a(\t))=v(\phi(\t))=\mu_r+\nu_{r+1}$. Hence, Lemma \ref{ore} shows that 
$$
v(a'(\t))\ge v(a(\t))-\nu_r=\mu_r+\nu_{r+1}-\nu_r=\mu_r+\dfrac{h_\phi}{e}> \mu_r+\dfrac{\rho(F)}e=v(F'(\t)),
$$
the last equality by Corollary \ref{rho}. From  $F'(x)=a'(x)+\phi'(x)$, we deduce $v(\phi'(\t))=v(F'(\t))$, or equivalently (by Corollary \ref{previous}), $v_{r+1}(\phi')=v_{r+1}(F')$.

Since $\phi$ and $F$ have the same Okutsu invariants, we have $e(\phi)=e(F)$ and $\mu(\phi)=\mu(F)$. By Corollary \ref{rho} we deduce that $\rho(\phi)=\rho(F)$.
\end{proof}

In order to get an algorithm to compute $\rho(F)$ (and $\df(L/k)$), we must replace the condition  $h_\phi>\rho(F)$ by something checkable. This is done in a different way for local fields of equal or unequal characteristic. 

\begin{corollary}\label{char0}
If $k$ has characteristic zero and $h_\phi\ge ev(e)$, then $\rho(\phi)=\rho(F)$.
\end{corollary}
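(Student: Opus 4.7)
The plan is to deduce the corollary from Theorem~\ref{howmuch} together with the classical upper bound on the wild part of the different exponent in characteristic zero. Recall that (Serre, \emph{Local Fields}, Ch.~III, \S6, Prop.~13)
\[
v_{\m_L}(\df(L/k))\le e-1+v_{\m_L}(e),
\]
and since $e\in\Z\subset k$ we have $v_{\m_L}(e)=ev(e)$, which gives $\rho(F)\le ev(e)$. Applied to the extension $L'/k$ determined by $\phi$, which has the same ramification index $e$ as $L/k$ by (\ref{equal}), the same estimate gives $\rho(\phi)\le ev(e)$.

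I would then split into two cases. If $h_\phi>\rho(F)$, Theorem~\ref{howmuch} applies directly and yields $\rho(\phi)=\rho(F)$. Otherwise $h_\phi\le\rho(F)$, and combining with the hypothesis $h_\phi\ge ev(e)$ and the upper bound $\rho(F)\le ev(e)$ forces the equalities $h_\phi=\rho(F)=ev(e)$. In this boundary case, Lemma~\ref{ore} applied to $a=F-\phi$ at level $i=r$ (note $\deg a<n=m_{r+1}$) gives
\[
v(a'(\t))\ge v(a(\t))-\nu_r=\mu_r+\nu_{r+1}-\nu_r=\mu_r+h_\phi/e=v(F'(\t)),
\]
using $v(a(\t))=v(\phi(\t))=\mu_r+\nu_{r+1}$ and Corollary~\ref{rho}. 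The non-archimedean inequality then yields $v(\phi'(\t))=v(F'(\t)-a'(\t))\ge v(F'(\t))$, which by Corollary~\ref{rho} applied to $\phi$ translates to $\rho(\phi)\ge\rho(F)=ev(e)$. Combined with the upper bound $\rho(\phi)\le ev(e)$ already established, we conclude $\rho(\phi)=\rho(F)$.

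The main step beyond what has already been developed in the paper is invoking Serre's classical upper bound, which is standard; I do not expect any essential obstacle, since the delicate boundary case $h_\phi=\rho(F)=ev(e)$ is handled for free because the same upper bound applies to $\phi$ and pinches $\rho(\phi)$ between $\rho(F)$ and $ev(e)$.
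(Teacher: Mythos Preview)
Your proof is correct and uses the same essential ingredients as the paper: Serre's bound $\rho\le ev(e)$ (applied to both $F$ and $\phi$) together with the estimate from Lemma~\ref{ore}. The organization differs slightly: rather than your case split, the paper observes uniformly that $h_\phi\ge ev(e)\ge\rho(F)$ forces $v_{r+1}(\phi')\ge v_{r+1}(F')$ via the argument of Theorem~\ref{howmuch}, and then obtains the reverse inequality by \emph{swapping the roles of $F$ and $\phi$}---since $[\phi_1,\dots,\phi_r]$ is also an Okutsu frame of $\phi$ and $F$ is an Okutsu approximation to $\phi$ with $h_F=h_\phi\ge ev(e)\ge\rho(\phi)$, the same argument yields $v_{r+1}(F')\ge v_{r+1}(\phi')$. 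This symmetry trick avoids isolating the boundary case $h_\phi=\rho(F)=ev(e)$, but your pinching argument handles that case just as well. One small point: when you write ``by Corollary~\ref{rho} applied to $\phi$'', you are implicitly using that $[\phi_1,\dots,\phi_r]$ is an Okutsu frame of $\phi$ (so that $v_{r+1}(\phi')=e\mu(\phi)+\rho(\phi)$); this is exactly the fact \cite[Lem.~4.3]{okutsu} that the paper invokes for its symmetry step, so you should cite it as well.
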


\begin{proof}
It is well-known that in this case $\rho(F)\le ev(e)$. Hence, $h_\phi\ge ev(e)$ implies 
$h_\phi\ge \rho(F)$, and the arguments of the proof of Theorem \ref{howmuch} show that $v_{r+1}(\phi')\ge v_{r+1}(F')$.

Let us now exchange the role of $\phi$ and $F$. The list $[\phi_1,\dots,\phi_r]$ is an Okutsu frame of $\phi(x)$, and $F(x)$ is an Okutsu approximation to $\phi(x)$ \cite[Lem. 4.3]{okutsu}. Hence, $v_{r+1}(F)=V_{r+1}$. Also, $\phi=-a+F$ is the canonical $F$-development of $\phi$, so that $h_F=v_{r+1}(a)-V_{r+1}=h_\phi\ge ev(e)$. By the previous argument we have $v_{r+1}(F')\ge v_{r+1}(\phi')$, and hence,  $v_{r+1}(F')=v_{r+1}(\phi')$.
\end{proof}

In the equal characteristic case, the value of $\rho(F)$ is unbounded among all polynomials $F$ of a fixed degree. We may proceed as follows in this case.
 
\begin{corollary}\label{charp}
If $k$ has characteristic $p$ and $h_\phi> \rho(\phi)$, then $\rho(\phi)=\rho(F)$.
\end{corollary}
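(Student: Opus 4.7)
The plan is to reduce to Theorem \ref{howmuch} by exploiting a symmetry in the Okutsu approximation relation. Since $\phi$ is irreducible and shares the Okutsu frame $[\phi_1,\dots,\phi_r]$ of $F$ (cf. \cite[Lem. 4.3]{okutsu}), the polynomial $F$ itself serves as an Okutsu approximation to $\phi$. This is precisely the observation already used in the proof of Corollary \ref{char0}: if $F=a+\phi$ is the canonical $\phi$-development of $F$, then $\phi=-a+F$ is the canonical $F$-development of $\phi$, so the associated parameter satisfies $h_F=v_{r+1}(a)-V_{r+1}=h_\phi$.

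With this symmetry in hand, I would simply apply Theorem \ref{howmuch} with the roles of $F$ and $\phi$ reversed, taking $\phi$ as the irreducible target polynomial and $F$ as the Okutsu approximation to it. The hypothesis $h_F>\rho(\phi)$ required by that theorem becomes, via $h_F=h_\phi$, the standing assumption $h_\phi>\rho(\phi)$ of the corollary. The conclusion is then $\rho(F)=\rho(\phi)$, which is exactly the desired equality.

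The conceptual content is minimal, but it is worth emphasizing what distinguishes this corollary from Corollary \ref{char0} and what makes it genuinely useful. In equal characteristic $p$ there is no absolute bound on $\rho(F)$, so the direct hypothesis $h_\phi>\rho(F)$ of Theorem \ref{howmuch} cannot be verified a priori. By contrast, $\rho(\phi)$ is an invariant of the explicit polynomial $\phi$ and can be computed from any OM representation of $\phi$ using Corollary \ref{rho}; hence $h_\phi>\rho(\phi)$ is an inequality one can actually check in practice. The only point that needs verification in the proof is that $\phi$ does admit an OM representation built on the frame $[\phi_1,\dots,\phi_r]$ with $F$ as its $(r{+}1)$-th term and slope $\lambda_{r+1}=-h_\phi$, which is supplied by \cite[Lem. 4.3]{okutsu} together with the symmetry computation above. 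No further ingredient is needed beyond Theorem \ref{howmuch} itself.
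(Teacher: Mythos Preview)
Your proof is correct but takes a different route from the paper. The paper does \emph{not} invoke the symmetry between $F$ and $\phi$: it stays with the root $\t$ of $F$ and simply reruns the estimate from the proof of Theorem~\ref{howmuch}, namely $v(a'(\t))\ge \mu_r+h_\phi/e$, but now compares it against $v(\phi'(\t))=\mu_r+\rho(\phi)/e$ (which is immediate from the definition $\rho(\phi)=v_{r+1}(\phi')-e\mu_r$ together with Corollary~\ref{previous}). The hypothesis $h_\phi>\rho(\phi)$ then yields $v(a'(\t))>v(\phi'(\t))$, so $v(F'(\t))=v(\phi'(\t))$ and the conclusion follows. Your approach instead swaps the roles of $F$ and $\phi$, uses the identity $h_F=h_\phi$ already established in the proof of Corollary~\ref{char0}, and applies Theorem~\ref{howmuch} as a black box. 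Your argument is tidier in that it avoids reopening the computation inside Theorem~\ref{howmuch}; the paper's direct version is marginally more self-contained, since it never leaves the extension $L=k(\t)$ and does not need to set up an OM representation of $\phi$ with $F$ in the $(r{+}1)$-th slot.
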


\begin{proof}
The argument of the proof of Theorem \ref{howmuch} shows that
$$
v(a'(\t))\ge \mu_r+\dfrac{h_\phi}{e}> \mu_r+\dfrac{\rho(\phi)}e=v(\phi'(\t)).
$$
Hence, $v(\phi'(\t))=v(F'(\t))$, and this implies $\rho(\phi)=\rho(F)$.
\end{proof}

Therefore, we may apply the Single-factor lifting (SFL) algorithm of \cite{GNP} to improve the Okutsu approximation $\phi$ till the checkable condition of either Corollary \ref{char0} or Corollary \ref{charp} is satisfied, and then we obtain a computation of $\rho(F)$ as  $\rho(\phi)$. In each iteration, from a given Okutsu approximation $\phi$, we compute a better one $\Phi$ satisfying $h_\Phi\ge 2h_\phi$; thus, only a finite (and small) number of iterations are required.  

In the equal characteristic case, the condition $h_\phi> \rho(\phi)$ must be attained after a finite number of iterations, because $\rho(F)$ is fixed and when we reach $h_\phi>\rho(F)$, then we have already $\rho(\phi)=\rho(F)<h_\phi$, by Theorem \ref{howmuch}.

\section{Computation of the $\p$-adic valuation of discriminants}
Let $A$ be a Dedekind domain whose field of fractions $K$ is a global field. Let $\p$ be a non-zero prime ideal of $A$, and $v_\p\colon K\lra \Z\cup\{\infty\}$ the canonical $\p$-adic valuation. 
Let $L=K[x]/(f(x))$ be the finite extension of $K$ determined by a monic irreducible separable polynomial $f(x)\in A[x]$, and let $ B$ be the integral closure of $A$ in  $L$. 

In this section, we apply the results of the preceding section to compute the $\p$-adic valuation of the discriminant ideal $\dsc(L/K)$, and the $v_\p$-value of the discriminant $\dsc(g)$ of an arbitrary polynomial $g(x)\in K[x]$. Both tasks are based on a combination of the Montes algorithm \cite{algorithm} and the Single-factor lifting (SFL) algorithm \cite{GNP}. 

\subsection{Local computation of the different}
Let $K_\p$ be the completion of $K$ at the prime ideal $\p$ and let $\oo_\p$ be the ring of integers of $K_\p$. We denote still by $v_\p$ the canonical extension of the $\p$-adic valuation to an algebraic closure of $K_\p$. Let 
$$
f(x)=F_1(x)\cdots F_t(x), 
$$
be the factorization of $f(x)$ into a product of monic irreducible polynomials in $\oo_\p[x]$. Let $L_1,\dots,L_t$ be the finite extensions of $K_\p$ obtained by adjoining to $K_\p$ a root of $F_1,\dots,F_t$, respectively. After a classical theorem of Hensel, there are exactly $t$ prime ideals $\P_1,\dots,\P_t$ in $ B$ lying over $\p$, and 
$$
e(\P_i/\p)=e(L_i/K_\p),\quad 
f(\P_i/\p)=f(L_i/K_\p),\quad 1\le i\le t. 
$$

Let us denote by $F_\P(x)\in\oo_\p[x]$ the irreducible factor that corresponds to any such prime ideal $\P$ dividing $\p$, and let $L_\P$ be the corresponding extension of $K_\p$. At the input of $f(x)$ and $\p$, the Montes algorithm computes an OM representation 
of every $\P\mid\p$, or equivalently, of every irreducible factor $F_\P$ \cite{newapp}:
\begin{equation}\label{OM2}
\ty_\P=(\psi_0;(\phi_1,\lambda_1,\psi_1);\cdots;(\phi_r,\lambda_r,\psi_r);(\phi_{r+1},\lambda_{r+1},\psi_{r+1})),
\end{equation}
carrying all invariants and operators described in section \ref{subsecOkutsu}. The polynomials $ \phi_1,\dots,\phi_{r+1}$ have all coefficients in $A$. Each type $\ty_\P$ is $f$-complete and $F_\P$-complete; it singles out $\P$ (or $F_\P$) by the following property:
$$
\ty_\P\mid F_\P,\quad \ty_\P\nmid F_\q, \ \forall\,\q\ne\P.
$$

In particular, all Okutsu invariants of $F_\P$, like $e(F_\P)$, $f(F_\P)$, $\mu(F_\P)$, $\ind(F_\P)$, $\mathfrak{f}(F_\P)$ and $\exp(F_\P)$, are computed by direct formulas in terms of data supported by the OM representation of $\P$.

\begin{remark}\label{precision}\rm
(1) \ Let $\phi(x)\in A[x]$ be an Okutsu approximation to $F_\P(x)$, and let $N_{r+1}$ be the Newton polygon operator with respect to $\phi(x)$ and the valuation $v_{r+1}$. By \cite[Thm.3.1]{HN}, we have $N_{r+1}^-(f)=N_{r+1}(F_\P)$. Thus, the local invariant  $h_\phi$, that was defined by $h_\phi:=v_{r+1}(F_\P-\phi)-V_{r+1}$, may be computed as well as
$h_\phi=v_{r+1}(a_0)-V_{r+1}$, where $a_0$ is the $0$-th coefficient of the canonical $\phi$-development of $f(x)$ (see Figure \ref{fignewton}).\medskip

(2) \ 
The restriction to $k$ of the valuation $v_{r+1}$ coincides with $e(\P/\p)v_\P$. Hence, the value of $v_{r+1}(a_0)$ does not change if we replace $\phi(x)$ by $\phi(x)+b(x)$, with $b(x)\in A[x]$ of degree less than $n_\P$ and  $b(x)\equiv 0 \md{\p^m}$, for some $m >(h_\phi+V_{r+1})/e(\P/\p)$. In particular, the coefficients of the Okutsu approximation $\phi$ may be always simplified modulo such a power of $\p$.
\end{remark}

At the input of an OM representation of $\P$ (or $F_\P$) and a prescribed precision $\nu\in \N$, the SFL algorithm computes an Okutsu approximation $\phi(x)$ to $F_\P(x)$, such that $\phi(x)\equiv F(x)\md{\p^\nu}$ \cite{GNP}. In each iteration of the main loop of SFL, a given Okutsu approximation $\phi(x)$ is used to construct a better Okutsu approximation $\Phi(x)$ with $h_\Phi\ge 2h_\phi$. By \cite[Lem. 4.5]{okutsu}, we reach the desired precision $\nu$ when $h_\phi\ge e(\P/\p)(\nu-\nu_r)$, where $r$ is the Okutsu depth of $F_\P$, and $\nu_r$ is the Okutsu invariant described in section \ref{subsecOkutsu}.

Therefore, the results of section \ref{secDiff} lead to the following routine to compute the different of the local extension $L_\P/K_\p$.\bigskip

\noindent{\bf Routine Different (char(K)\,=\,$\mathbf{0}$)}\vskip 1mm

\noindent INPUT:

\noindent $-$ A monic irreducible polynomial $f(x)\in A[x]$.

\noindent $-$ An OM representation $\ty_\P$, as in (\ref{OM2}), of a prime ideal $\P$ of $ B$.

\vskip 2mm

\st1  $e\leftarrow e_0\,e_1\cdots e_r$, \quad
$\mu\leftarrow \sum_{1\le j\le r} (e_jf_j\cdots e_rf_r-1)h_j/(e_1\cdots e_j)$.

\st2 {\tt precision} $\leftarrow e\,v_\p(e)$,\quad $\rho\leftarrow 0$.

\st3 IF {\tt precision} $\ne0$ THEN DO

\stst{3.1} Apply the SFL algorithm to compute an Okutsu approximation $\phi\in A[x]$ to $F_\P$, such that $h_\phi\ge$ {\tt precision}. 

\stst{3.2} Compute  $\rho\leftarrow v_{r+1}(\phi')-\,e\cdot\mu$. (use Proposition \ref{vgt})

\st4  \ {\tt Diff} $\leftarrow e-1+\rho$.
\vskip 2mm

\noindent OUTPUT:

\noindent $-$ The $\P$-valuation of $\df(L_\P/K_\p)$, as the value of the variable {\tt Diff}.\bigskip

\noindent{\bf Routine Different (char(K)\,=\,$\mathbf{p>0}$)}\vskip 1mm

\noindent INPUT:

\noindent $-$ A monic irreducible polynomial $f(x)\in A[x]$.

\noindent $-$ An OM representation $\ty_\P$, as in (\ref{OM2}), of a prime ideal $\P$ of $ B$.

\vskip 2mm

\st1  $e\leftarrow e_0\,e_1\cdots e_r$, \quad
$\mu\leftarrow \sum_{1\le j\le r} (e_jf_j\cdots e_rf_r-1)h_j/(e_1\cdots e_j)$.

\st2  $\rho\leftarrow 0$,\quad$\phi\leftarrow\phi_{r+1}$,\quad$h\leftarrow h_{r+1}$.

\st3 IF $p\mid e$ THEN DO

\stst{3.1} Compute $\rho\leftarrow v_{r+1}(\phi')-\,e\cdot\mu$. (use Proposition \ref{vgt})

\stst{3.2} WHILE $\,h\le\rho\,$ DO

\ststst{3.2.1} Apply one iteration of the SFL algorithm to compute an Okutsu approximation $\Phi\in A[x]$ to $F_\P(x)$, such that $h_\Phi\ge 2h_\phi$. 

\ststst{3.2.2} Compute   $\rho\leftarrow v_{r+1}(\Phi')-\,e\cdot\mu$. (use Proposition \ref{vgt})

\ststst{3.2.3} $h\leftarrow h_\Phi$.

\st4  \ {\tt Diff} $\leftarrow e-1+\rho$.
\vskip 2mm

\noindent OUTPUT:

\noindent $-$ The $\P$-valuation of $\df(L_\P/K_\p)$, as the value of the variable {\tt Diff}.\bigskip

Let us analyze the complexity of this routine. All tasks we are interested in may be performed modulo $\p^\nu$, for a sufficiently high precision $\nu$. Thus, we may assume that the elements of $A$ are finite $\pi$-adic developments, for some $\pi\in A$ which is a local generator of $\p$. In particular, the computation of the $\p$-adic valuation $v_\p$ has a negligible cost. 

\begin{definition}
An operation of $A$ is called \emph{$\p$-small} if it involves two elements belonging to a fixed system of representatives of $A/\p$.
\end{definition}

Let $q:=\#A/\p$. A $\p$-small operation is equivalent to  $O\left(\log(q)^{1+\epsilon}\right)$ word ope\-rations, the cost of an operation in the residue field $A/\p$.
Working at precision $\nu$, each multiplication in $A$ costs $O(\nu^{1+\epsilon})$
$\p$-small operations if we assume the fast multiplications techniques of 
Sch\"onhage-Strassen \cite{SS}.

Denote from now on,
$$n:=[L\colon K], \quad e:=e(\P/\p),\quad f:=f(\P/\p),\quad n_\P:=ef, \quad \delta_\P:=v_\p(\dsc(F_\P)).
$$
Note that $n,e,f,n_\P$ are intrinsic invariants of $L/K$, but $\delta_\P$ depends on the choice of the polynomial $f(x)\in A[x]$ used to construct this extension.   
In the tamely ramified case the routine is trivial; thus, we assume from now on that the characteristic $p$ of $A/\p$ divides $e$. In particular, the different ideal $\df(L_\P/K_\p)$ is divided by the $e$-th power of the maximal ideal, and $n_\P\le v_\p(\dsc(L_\P/K_\p))\le\delta_\P$.

\begin{lemma}\label{complexitySFL}
The SFL algorithm requires $
O\left(nn_\P (h/e)^{1+\epsilon}+n(\delta_\P)^{1+\epsilon}\right)
$
$\p$-small operations in $A$ to compute an Okutsu approximation $\phi$ to $F_\P$, such that $h_\phi\ge h$. 
\end{lemma}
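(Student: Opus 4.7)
The plan is to analyze the main loop of SFL by combining three facts: each iteration roughly doubles $h_\phi$, each iteration can be executed at a $\p$-adic working precision that itself doubles, and the cost per iteration is polynomial in the degrees $n$, $n_\P$ and the current precision. The lemma will then follow from summing a geometric series and converting the Okutsu-side bound back into $\delta_\P$ via Theorem~\ref{vdisc}.

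Specifically, I would first recall from \cite{GNP} that one pass through the SFL main loop takes an Okutsu approximation $\phi$ and returns $\Phi$ with $h_\Phi\ge 2h_\phi$. Starting from the approximation $\phi_{r+1}$ supplied by the Montes algorithm, at most $N=O(\log h)$ iterations are therefore needed to reach $h_\phi\ge h$.

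Next, by Remark~\ref{precision}(2), the $k$-th iteration can be carried out at any $\p$-adic working precision $\nu_k$ satisfying $e\nu_k\ge V_{r+1}+h_{\phi^{(k)}}$; choosing $\nu_k=\lceil(V_{r+1}+h_{\phi^{(k)}})/e\rceil+O(1)$ makes the sequence $(\nu_k)$ essentially geometric, with $\nu_N=O(\mu(F_\P)+h/e)$, where I use $V_{r+1}/e=O(\mu(F_\P))$ from the definition of $V_{r+1}$ in section~\ref{subsecOkutsu}. The cost of one iteration at precision $\nu_k$ is, following the analysis in \cite{GNP}, a constant number of polynomial multiplications and divisions involving $f(x)$ (degree $n$) and the current $\phi$ (degree $n_\P$), with coefficients taken modulo $\p^{\nu_k}$. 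With Sch\"onhage-Strassen fast multiplication each $\p$-adic product costs $O(\nu_k^{1+\epsilon})$ $\p$-small operations, while schoolbook polynomial arithmetic accounts for $O(nn_\P)$ coefficient products per polynomial multiplication, producing the per-iteration bound $O(nn_\P\nu_k^{1+\epsilon})$.

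Summing the (almost) geometric series, the last term dominates and yields
\[
\sum_{k=0}^N nn_\P\nu_k^{1+\epsilon}=O\bigl(nn_\P(h/e)^{1+\epsilon}+nn_\P\mu(F_\P)^{1+\epsilon}\bigr).
\]
Now Theorem~\ref{vdisc} gives $n_\P\mu(F_\P)\le\delta_\P$, so $nn_\P\mu(F_\P)^{1+\epsilon}\le n\delta_\P^{1+\epsilon}$, which combined with the first term gives the claimed estimate. The main obstacle, and the place where I would rely most heavily on \cite{GNP}, is verifying that a single SFL iteration really runs in $O(nn_\P\nu_k^{1+\epsilon})$ operations at working precision $\nu_k$; once that per-step cost is accepted, the rest is routine bookkeeping with a geometric sum.
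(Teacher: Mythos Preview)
Your proposal is correct and follows essentially the same approach as the paper: obtain an SFL cost estimate of the shape $O\!\left(nn_\P\big((h/e)^{1+\epsilon}+(V_{r+1}/e)^{1+\epsilon}\big)\right)$ and then convert the second term into $n(\delta_\P)^{1+\epsilon}$ via $V_{r+1}/e\le 2\mu(F_\P)$ and $n_\P\mu(F_\P)\le\delta_\P$ from Theorem~\ref{vdisc}. The paper simply cites \cite[Lem.~6.5]{GNP} for the first step where you sketch the geometric-doubling argument; the only point you gloss over is the inequality $\nu_r\le\mu_r$ (needed for $V_{r+1}/e=O(\mu(F_\P))$), which the paper justifies via $e_if_i>1$ for all $i$.
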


\begin{proof}
Let $r$ be the Okutsu depth of $F_\P$ and consider the Okutsu invariants $e_j$, $f_j$, $\mu_j$, $\nu_j$, $V_j$, described in section \ref{subsecOkutsu}. Along the proof of \cite[Lem. 6.5]{GNP}, it is obtained an estimation of $O\left(nn_\P((h/e)^{1+\epsilon}+(V_{r+1}/e)^{1+\epsilon})\right)$ $\p$-small operations for the SFL routine. The lemma is then a consequence of
\begin{equation}\label{Vdisc}
V_{r+1}/e=\mu_r+\nu_r\le 2\mu_r\le 2\delta_\P/n_\P.
\end{equation}
The inequality $\nu_r\le \mu_r$ is a consequence of $e_if_i>1$, for all $1\le i\le r$, and the explicit formulas for these Okutsu invariants given in section \ref{subsecOkutsu}. The inequality $\mu_r=\mu(F_\P)\le \delta_\P/n_\P$ follows from Proposition \ref{okutsu} and Theorem \ref{vdisc}.
\end{proof}

By \cite[Lem. 4.21]{HN}, the computation of $v_{r+1}(\phi')$ is essentially equivalent to the computation of the $(\phi_1,\dots,\phi_r)$-multiadic expansion of $\phi'$. By \cite[Lem. 5.4]{BNS}, this requires $O((n_\P)^{1+\epsilon})$ operations in $A$.
By Remark \ref{precision},(2) and (\ref{Vdisc}), this is equivalent to 
\begin{equation}\label{vr+1}
O\left((n_\P)^{1+\epsilon}((h_\phi+V_{r+1})/e)^{1+\epsilon}\right)= O\left((n_\P)^{1+\epsilon}(h_\phi/e)^{1+\epsilon}+(\delta_\P)^{1+\epsilon}\right)
\end{equation}
$\p$-small operations in $A$. 

\begin{theorem}\label{complexityDiff}
Assuming $\P$ wildly ramified over $\p$, the  
routine {\tt Different} has a cost of 
$O\left(n(\delta_\P)^{1+\epsilon}\right)$  $\p$-small operations in $A$
\end{theorem}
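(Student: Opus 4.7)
The plan is to split by characteristic and to bound, in each case, the total cost of the two non-trivial subroutines called by {\tt Different}: the SFL calls (Lemma~\ref{complexitySFL}) and the evaluations of $v_{r+1}(\phi')$ (equation~(\ref{vr+1})). Before analysing the loops, I would collect two elementary bounds that follow from the wild ramification hypothesis. First, $\rho(F)\ge 1$ yields $n_\P\le v_\p(\dsc(L_\P/K_\p))\le \delta_\P$, while Theorem~\ref{vdisc} gives $\rho(F)\le \delta_\P/f$. Second, since $e$ is divisible by $p$, one has $v_\p(e)=v_\p(p)\,v_p(e)=O(\log n_\P)=O(\log\delta_\P)$, so that $v_\p(e)^{1+\epsilon}$ may be absorbed into $\delta_\P^{\epsilon}$ by slightly enlarging $\epsilon$.

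For the characteristic-zero routine, the body consists of exactly one SFL call at precision $h=ev_\p(e)$ followed by one evaluation of $v_{r+1}(\phi')$. Lemma~\ref{complexitySFL} bounds the SFL cost by $O(nn_\P v_\p(e)^{1+\epsilon}+n\delta_\P^{1+\epsilon})$, which becomes $O(n\delta_\P^{1+\epsilon})$ after using $n_\P\le\delta_\P$ together with the logarithmic bound on $v_\p(e)$. Equation~(\ref{vr+1}) treats the subsequent $v_{r+1}$ computation identically, giving $O(\delta_\P^{1+\epsilon})$, which is dominated.

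For the positive-characteristic routine, by Corollary~\ref{charp} the loop exits as soon as $h>\rho(\phi)$, and this can happen no later than when $h$ first exceeds $\rho(F)$. Since $h$ at least doubles per iteration, the terminal value satisfies $h_{\mathrm{final}}=O(\rho(F))=O(\delta_\P/f)=O(\delta_\P)$ and the loop performs $L=O(\log\delta_\P)$ iterations. Summing the SFL cost geometrically along the doubling values of $h$ yields a total of $O(nn_\P(h_{\mathrm{final}}/e)^{1+\epsilon}+Ln\delta_\P^{1+\epsilon})$; using $h_{\mathrm{final}}/e=O(\delta_\P/n_\P)$ the first term is $O(n\delta_\P^{1+\epsilon})$, and the factor $L$ in the second is absorbed into the $\epsilon$. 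An analogous geometric sum with~(\ref{vr+1}) handles the $L+1$ evaluations of $v_{r+1}(\phi')$ at total cost $O(\delta_\P^{1+\epsilon})$, which is again dominated by the SFL cost.

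The main technical point will be the geometric-series argument in positive characteristic: Lemma~\ref{complexitySFL} is stated as a \emph{cumulative} cost to reach a prescribed precision, so to bound an individual SFL doubling I would argue that its incremental cost is at most the cumulative bound at the new, doubled precision; since these cumulative bounds themselves grow geometrically in $h$, their sum is controlled by the last term up to a constant factor. The remainder is bookkeeping, namely verifying that the $\log\delta_\P$ factors picked up from $v_\p(e)$ and from the iteration count are harmlessly absorbed into $\delta_\P^{\epsilon}$ by an arbitrarily small enlargement of $\epsilon$.
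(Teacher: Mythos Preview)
Your proposal is correct and follows essentially the same route as the paper's proof: split by characteristic, invoke Lemma~\ref{complexitySFL} for the SFL cost, invoke~(\ref{vr+1}) for the cost of computing $v_{r+1}(\phi')$, and use Theorem~\ref{vdisc} together with $n_\P\le\delta_\P$ to simplify. The only organisational difference is that in positive characteristic the paper applies Lemma~\ref{complexitySFL} \emph{once} with target precision $h\approx\rho(F)$ (the lemma already gives the cumulative cost of all iterations), thereby avoiding the extra factor $L$ in the second term that you then absorb into the $\epsilon$; your geometric-sum argument is still needed for the $v_{r+1}(\phi')$ evaluations, exactly as in the paper.
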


\begin{proof}
Suppose first $\op{char}(K)=0$. By Lemma \ref{complexitySFL}, step {\bf 3.1} has a complexity of
$$
O\left(n n_\P v_\p(e)^{1+\epsilon}+n(\delta_\P)^{1+\epsilon}\right)=
O\left(n (n_\P)^{1+\epsilon}+n(\delta_\P)^{1+\epsilon}\right)=
O\left(n(\delta_\P)^{1+\epsilon}\right),
$$
the last equality because $n_\P\le \delta_\P$. By (\ref{vr+1}), the complexity of step {\bf 3.2} is dominated by that of step {\bf 3.1}. This ends the proof of the lemma in this case.

If $\op{char}(K)=p>0$, the routine {\tt Different} is less efficient in practice because we must compute $v_{r+1}(\phi')$ after every iteration of the SFL routine. By the remarks following Corollary \ref{charp}, the number of iterations is bounded from above by $\log_2(\rho)$, where $\rho:=\rho(F_\P)$. 

By Lemma \ref{complexitySFL}, the complexity of all SFL iterations is
$$
O\left(nn_\P(\rho/e)^{1+\epsilon}+n(\delta_\P)^{1+\epsilon}\right)=
O\left(n (\delta_\P)^{1+\epsilon}\right).
$$
In the last equality we used $n_\P\rho/e=f\rho\le\delta_\P$, by Theorem \ref{vdisc}.

Assume that we start the iterations with $h_\phi=1$ (the worst possible case). If we compute each Okutsu approximation $\phi$ at the precision described in Remark \ref{precision}, the complexity of the computation of all $v_{r+1}(\phi')$ is, by (\ref{vr+1}):
$$
\sum_{h=1}^{\log_2 \rho}(n_\P2^h/e)^{1+\epsilon}+(\delta_\P)^{1+\epsilon}=
O\left((n_\P\rho/e)^{1+\epsilon}+(\delta_\P)^{1+\epsilon}\log_2 \rho\right)=O\left((\delta_\P)^{1+\epsilon}\right).
$$
\end{proof}

\subsection{Local computation of discriminants}\label{subsecDiscf}
The discriminant of $L/K$ is the product of the local discriminants, so that
$$v_\p(\dsc(L/K))=\sum\nolimits_{\P\mid \p}v_\p(\dsc(L_\P/K_\p)).$$
Since the local discriminant ideal $\dsc(L_\P/K_\p)$ is the norm of the local different ideal $\df(L_\P/K_\p)$, the routine {\tt Different} leads in an obvious way to a routine to compute $v_\p(\dsc(L/K))$. This routine does not require the previous computation of the discriminant $\dsc(f)$ of the polynomial $f(x)$; actually $v_\p(\dsc(f))$ may be deduced from the identity $v_\p(\dsc(f))=v_\p(\dsc(L/K))+2\ind_\p(f)$, where  
\begin{equation}\label{indp}
\ind_\p(f):=\operatorname{length}_{\oo_\p}( B\otimes_{A}\oo_\p)/(A[\t]\otimes_{A}\oo_\p).
\end{equation}
This local index $\ind_\p(f)$ is computed by the Montes algorithm as a by-product. 

This allows us to consider a more general routine {\tt pDiscriminant} that admits an arbitrary monic polynomial $g(x)\in A[x]$   
as input. Before describing the routine, let us review some generalities on discriminants of polynomials and the role of the local index $\ind_\p(g)$ for a non-irreducible polynomial.

Let $g(x)=a_nx^n+\cdots +a_1x+a_0$ be a polynomial of degree $n$, with coefficients in $K$. The discriminant of $g(x)$ is defined as:
$$
\dsc(g):=a_n^{2n-2}\prod\nolimits_{i\ne j}(\t_i-\t_j)=a_n^{-1}\res(g,g'),
$$ 
where $\t_1,\dots,\t_n$ are the $n$ roots of $g(x)$ in an algebraic closure, with due counting of multiplicities. Clearly, for arbitrary $a,b\in K^*$, one has
$$
\dsc(ag(bx))=a^{2n-2}b^{n^2-n}\dsc(g(x)).
$$
Thus, as regards the computation of $\dsc(g)$, we may assume from now on that $g(x)$ is a monic polynomial with coefficients in $A$.

Let $g(x)=G_1(x)\cdots G_t(x)$ be the factorization of $g(x)$ into a product of monic irreducible polynomials in $\oo_\p[x]$. Let $L_G$ be the finite extension of $K_\p$ determined by each irreducible factor $G(x)$, and denote by $\P_G$ the maximal ideal of the ring of integers of $L_G$. 

The discriminant has a well-known good behaviour with respect to products:
\begin{equation}\label{product}
\dsc(g)=\prod\nolimits_{1\le i\le t}\dsc(G_i)\cdot\prod\nolimits_{i\ne j}\res(G_i,G_j).
\end{equation}

We define the \emph{$\p$-index} of $g(x)$ to be:
\begin{equation}\label{pindex}
\ind_\p(g):=\sum\nolimits_{1\le i\le t}\ind_\p(G_i)+\sum\nolimits_{1\le i<j\le t}v_\p(\res(G_i,G_j)),
\end{equation}
where $\ind_\p(G_i)$ is, by definition, the local index $\ind(G_i)$ that was considered in section \ref{secDiff}. The reader may check that this definition coincides with that of (\ref{indp}) when $g(x)$ is irreducible in $A[x]$.  

For each $i$ we have $v_\p(\dsc(G_i))=2\ind_\p(G_i)+v_\p(\dsc(L_{G_i}/K_\p))$. Therefore, (\ref{product}) and (\ref{pindex}) show that:
$$
v_\p(\dsc(g))=2\ind_\p(g)+\sum\nolimits_{1\le i\le t}v_\p(\dsc(L_{G_i}/K_\p)).
$$
Now, if $g(x)$ is a separable polynomial, the Montes algorithm computes OM representations of all these local factors and also the value of $\ind_\p(g)$ as a by-product. Thus, properly combined with the routine {\tt Different}, we get the following routine to compute $\sum_{1\le i\le t}v_\p(\dsc(L_{G_i}/K_\p))$ and $v_\p(\dsc(g))$.\bigskip

\noindent{\bf Routine pDiscriminant}\vskip 1mm

\noindent INPUT:

\noindent $-$ A monic polynomial $g(x)\in A[x]$ such that  $\dsc(g)\ne0$.

\noindent $-$ A non-zero prime ideal $\p$ of $A$.

\vskip 2mm

\st1  Apply the Montes algorithm to get OM representations of the different irreducible factors $G(x)\in\oo_\p[x]$ of $g(x)$, and the value of $\ind_\p(g)$.

\st 2 {\tt Disc} $\leftarrow 0$.

\st3 FOR each factor $G$, with OM representation $\ty_G$, DO

\stst{3.1} $f\leftarrow f_0\,f_1\cdots f_r$.

\stst{3.2} Call {\tt Different}($g$,$\,\ty_G$) and accumulate $f\cdot v_{\P_G}(\df(L_G/K_\p))$ to {\tt Disc}.

\st4  {\tt DiscPol} $\leftarrow$ {\tt Disc} $+\,2\ind_\p(g)$.
\vskip 2mm

\noindent OUTPUT:

\noindent $-$ The value of {\tt Disc} is the sum of the $\p$-adic valuations of all local discriminants $\dsc(L_G/K_\p)$, for $G$ running on all irreducible factors of $g$ over $K_\p$. 
The value of {\tt DiscPol} is the $\p$-valuation of $\dsc(g)$. \medskip

\begin{theorem}\label{complexityDisc}
Let $n:=\deg g$, $\delta:=v_\p(\dsc(g))$ and $q:=\#A/\p$. The complexity of the routine {\tt pDiscriminant} is
$O\left(n^{2+\epsilon}+n^{1+\epsilon}(1+\delta)\log q+n^{1+\epsilon}\delta^{2+\epsilon}\right)$
$\p$-small operations in $A$.
\end{theorem}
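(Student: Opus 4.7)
The plan is to split the running time of \texttt{pDiscriminant} into two contributions: the single call to the Montes algorithm in Step 1, and the loop of calls to the \texttt{Different} routine in Step 3. The accumulation in Steps 2, 3.1 and 4 involves only additions and a few products of small integers, so its cost is of lower order and may be absorbed.

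For the Step 3 contribution, I would apply Theorem \ref{complexityDiff} to each wildly ramified local factor $G$ of $g$ over $\oo_\p$. Since the polynomial passed to \texttt{Different} is $g$ itself, the ``$n$'' in that theorem is $\deg g$, so each such call costs $O\bigl(n\,\delta_{\P_G}^{1+\epsilon}\bigr)$ $\p$-small operations, where $\delta_{\P_G}:=v_\p(\dsc(G))$. Tamely ramified factors contribute nothing beyond reading off the OM data. Summing over $G$ and applying the elementary superadditivity inequality $\sum_G x_G^{1+\epsilon}\le \bigl(\sum_G x_G\bigr)^{1+\epsilon}$ on non-negative reals, together with the obvious consequence $\sum_G \delta_{\P_G}\le \delta$ of (\ref{product}) and (\ref{pindex}), yields a total Step 3 cost of $O\bigl(n\,\delta^{1+\epsilon}\bigr)$. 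This is dominated by the term $n^{1+\epsilon}\delta^{2+\epsilon}$ in the stated bound (and is vacuous when $\delta=0$, in which case no factor can be wildly ramified).

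For Step 1, I would invoke the complexity of the Montes algorithm as analyzed in \cite{BNS}: at the input $(g,\p)$ it simultaneously produces the OM representations of all local irreducible factors and the integer $\ind_\p(g)$, at a cost of exactly $O\bigl(n^{2+\epsilon}+n^{1+\epsilon}(1+\delta)\log q+n^{1+\epsilon}\delta^{2+\epsilon}\bigr)$ $\p$-small operations. Morally, the $n^{2+\epsilon}$ and $n^{1+\epsilon}(1+\delta)\log q$ terms capture the polynomial arithmetic over the successive residue fields (Newton polygon constructions and residual polynomial factorizations, with the $\log q$ factor accounting for the cost of a single residue-field operation), while $n^{1+\epsilon}\delta^{2+\epsilon}$ records the $\p$-adic precision to which the algorithm must work in order to separate all irreducible factors in $\oo_\p[x]$. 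The main obstacle in a truly self-contained proof would be re-deriving this Montes bound; taking it from \cite{BNS} and combining with the Step 3 estimate above gives the announced complexity.
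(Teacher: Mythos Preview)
Your proof is correct and follows essentially the same approach as the paper: you bound Step~1 by citing the Montes complexity from \cite{BNS}, bound Step~3 by summing the estimates from Theorem~\ref{complexityDiff} over the wildly ramified factors to get $O(n\delta^{1+\epsilon})$, and observe this is dominated by Step~1. The only cosmetic difference is that you spell out the superadditivity inequality and the bound $\sum_G\delta_{\P_G}\le\delta$ explicitly, whereas the paper leaves these implicit.
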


\begin{proof}
The complexity of the Montes algorithm was analyzed in \cite{FV}, \cite{pauli}. A  sharper estimation has been obtained in \cite{BNS}, according to which, step {\bf 1} of the routine {\tt pDiscriminant} has a cost of  $O\left(n^{2+\epsilon}+n^{1+\epsilon}(1+\delta)\log q+n^{1+\epsilon}\delta^{2+\epsilon}\right)$ $\p$-small operations in $A$. 

By Theorem \ref{complexityDiff}, the complexity of step {\bf3} may be estimated as: 
$$O\left(\sum\nolimits_{\P}n(\delta_\P)^{1+\epsilon}\right)=O\left(n\delta^{1+\epsilon}\right) \quad\mbox{$\p$-small operations},
$$
where $\P$ runs on the wildly ramified primes lying over $\p$. Clearly, this cost is dominated by that of step {\bf1}.
\end{proof}

\begin{corollary}\label{complexityFast}
If we assume $\p$ small ($\log q=O(1)$), then the complexity of {\tt pDiscriminant} is $O\left(n^{2+\epsilon}+n^{1+\epsilon}\delta^{2+\epsilon}\right)$ word operations.
\end{corollary}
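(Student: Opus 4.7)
The plan is to derive this directly from Theorem \ref{complexityDisc} by converting $\p$-small operations into word operations under the hypothesis $\log q = O(1)$. Recall that a single $\p$-small operation costs $O(\log(q)^{1+\epsilon})$ word operations; under the assumption that $\p$ is small this conversion factor is $O(1)$, so any bound in $\p$-small operations transfers to the same asymptotic bound in word operations without loss.

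Concretely, I would take the estimate
$$
O\!\left(n^{2+\epsilon}+n^{1+\epsilon}(1+\delta)\log q+n^{1+\epsilon}\delta^{2+\epsilon}\right)
$$
given by Theorem \ref{complexityDisc} and substitute $\log q = O(1)$ to obtain
$$
O\!\left(n^{2+\epsilon}+n^{1+\epsilon}(1+\delta)+n^{1+\epsilon}\delta^{2+\epsilon}\right).
$$
Then absorb the secondary terms: $n^{1+\epsilon} \le n^{2+\epsilon}$ is swallowed by the first term, and $n^{1+\epsilon}\delta$ is dominated by $n^{1+\epsilon}\delta^{2+\epsilon}$ whenever $\delta\ge 1$ (while the case $\delta=0$ is trivial, as both contributions vanish). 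What remains is exactly $O\!\left(n^{2+\epsilon}+n^{1+\epsilon}\delta^{2+\epsilon}\right)$ word operations.

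There is no substantive obstacle here; the statement is a purely routine simplification of the previously established complexity bound. The only subtlety worth flagging is the legitimacy of applying the conversion factor $O(\log(q)^{1+\epsilon})$ uniformly across all $\p$-small operations counted in Theorem \ref{complexityDisc}, which is justified by the setup fixed at the beginning of the complexity discussion (finite $\pi$-adic precision, Schönhage–Strassen arithmetic in $A$, etc.), so no additional hypotheses are needed.
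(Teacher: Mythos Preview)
Your proposal is correct and matches the paper's treatment: the corollary is stated without proof as an immediate consequence of Theorem~\ref{complexityDisc}, and your derivation spells out exactly the intended routine simplification (substituting $\log q=O(1)$, absorbing $n^{1+\epsilon}$ into $n^{2+\epsilon}$ and $n^{1+\epsilon}\delta$ into $n^{1+\epsilon}\delta^{2+\epsilon}$, and using the $O(\log(q)^{1+\epsilon})=O(1)$ conversion from $\p$-small to word operations).
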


In this routine the input polynomial must have $\dsc(g)\ne0$, otherwise the Montes algorithm enters into an endless loop. Nevertheless, if $\chr(K)=0$, we can use upper bounds for the discriminant to design an algorithm that works for an arbitray monic input $g(x)\in A[x]$. For instance, if $A=\Z$ and $\p=p\Z$, for some prime number $p$, then Mahler's bound \cite{mahler}:
$$
|\dsc(g)|<n^n\|g\|_\infty^{2n-2},\quad \|g\|_\infty:=|a_0|+\cdots+|a_n|
$$
leads to $v_p(\dsc(g))< n\,v_p(n)+(2n-2)v_p(\|g\|_\infty)$. The reader may derive similar upper bounds, $v_\p(\dsc(g))< N(g)$, in the general case. Along the flow of Montes algorithm, partial $\p$-indices (taking positive integer values) are accumulated to a variable {\tt Index}, whose final output value is $\ind_\p(g)$. If $\dsc(g)=0$, we run into an endless loop that strictly increases {\tt Index} at each iteration. Thus, we may introduce a control instruction that allows the next iteration while the value of {\tt Index} is less than $N(g)$, but it breaks the loop and outputs $v_\p(\dsc(g))=\infty$ otherwise.

\section{Computation of the $\p$-adic valuation of resultants}
Consider two polynomials 
$$f(x)=a_nx^n+\cdots+a_1x+a_0,\quad g(x)=b_mx^m+\cdots+b_1x+b_0\in K[x],$$ of degree $n$, $m$, respectively. For any $a,b,c\in K^*$ we have
$$
\res(af(bx),cg(bx))=a^mc^nb^{nm}\res(f(x),g(x)).
$$
Hence, as regards the computation of the resultant, we may suppose that $f(x),g(x)$ are monic and have coeficients in $A$.   

In \cite[Sec. 4.1]{HN} the $\p$-adic valuation of the resultant of two monic polynomials in $A[x]$ is computed by applying a kind of non-optimized version of the Montes algorithm. In this section we reformulate those ideas into a concrete algorithm based on the optimized Montes algorithm as described in \cite{algorithm}. 

\subsection{Partial exponents of resultants}\label{subsecPartial}In this section we introduce certain positive integers $\res^h_{\ty,\phi_i}(f,g)$, whose accumulation is equal to $v_\p(\res(f,g))$. These partial $v_\p$-values of resultants are computed in terms of combinatorial data of adequate Newton polygons of the polynomials $f(x)$, $g(x)$. 

Let us describe in some detail the relevant Newton polygon routine. Along the algorithm that computes $v_\p(\res(f,g))$, we shall construct types of order $i-1$,
$$\ty=(\psi_0;(\phi_1,\lambda_1,\psi_1);\cdots;(\phi_{i-1},\lambda_{i-1},\psi_{i-1})),$$ dividing $f(x)$. All polynomials $\phi_1,\dots,\phi_{i-1}$ will have coefficients in $A$, and the type will always be loaded with three level $i$ invariants, $\phi_i$, $\cs_i$, $\omega_i$. The invariant $\phi_i$ is a \emph{representative} of $\ty$; that is, $\phi_i(x)\in A[x]$ is a monic polynomial of degree $m_i:=e_{i-1}f_{i-1}m_{i-1}$, such that $R_{i-1}(\phi_i)\sim\psi_{i-1}$. The choice of a
representative of $\ty$ determines a Newton polygon operator  $N_i$, with respect to  $\phi_i$ and the MacLane valuation $v_i$ supported by $\ty$ (see section \ref{subsecOkutsu}).

The invariant $\cs_i$ is a \emph{cutting slope}; it is a positive integer $h$ telling us that we must compute only the subpolygon $N_i^h(f)$ formed by the sides of $N_i(f)$ of slope less than $-h$. By a variation of Lemma \ref{length}, we are able to compute a priori the length of this subpolygon, and this is the role of the third invariant: $\omega_i=\ell(N_i^{\cs_i}(f))$. Since we always know a priori the length of the Newton polygons we are interested in, we may use the following {\tt Newton} routine.\medskip

\noindent{\bf Routine Newton($\ty$,\,$\omega$,\,$g$)}\vskip 1mm

\noindent INPUT:

\noindent $-$ A type $\ty$ of order $i-1\ge 0$ and a representative $\phi$ of $\ty$.

\noindent $-$ A positive integer $\omega$.

\noindent $-$ A non-zero polynomial $g(x)\in K[x]$.
\vskip 2mm

\noindent Compute the first $\omega+1$ coefficients $a_0(x),\dots,a_\omega(x)$ of the canonical $\phi$-expansion of $g(x)$ and the Newton polygon $N$ of the set of points $(s,v_i(a_s\phi^s))$, for $0\le s\le\omega$.
\vskip 2mm

\noindent OUTPUT:

\noindent $-$ A list $[S_1,\dots,S_t]$ of the sides of $N$.\medskip

A side $S$ of $N$ is a segment of negative slope of the Euclidean plane, whose end points have non-negative integer coordinates. The \emph{length}, $E=\ell(S)$, and \emph{height}, $H$, of $S$ are the lengths of its projections to the horizontal and vertical axes, respectively. Following the convention of \cite[Sec.1.1]{HN}, we admit sides $S$ of slope $-\infty$; we may think that the left end point of such an $S$ is $(0,\infty)$, and the right end point is $(s,u)$, with $s\in\Z_{>0}$ and $u\in\Z_{\ge0}$. The length of such a side is $E=s$ and the height is $H=\infty$ (see Figure \ref{figlength}).

If the left end point of a Newton polygon $N$ has a positive abscissa, we consider (formally) that the side of slope $-\infty$ determined by this point is also one of the sides of $N$. According to this formalism, if $N=S_1+\cdots+S_t$ are all sides, finite and infinite, of $N$, we have always $\ell(N)=\ell(S_1)+\cdots+\ell(S_t)$.

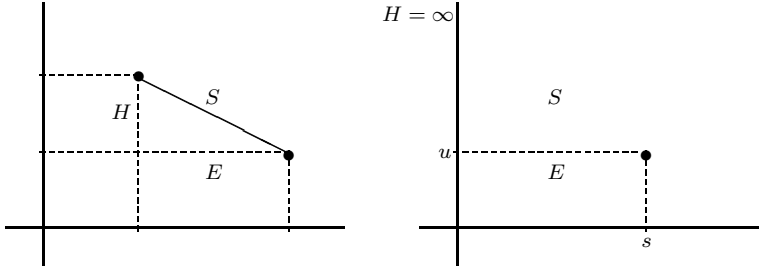
\begin{figure}\caption{Length and height of a side of negative slope}
\label{figlength}
\setlength{\unitlength}{5mm}
\begin{picture}(18,7.4)
\put(2.35,4.35){$\bullet$}\put(6.35,2.25){$\bullet$}
\put(-1,0.5){\line(1,0){9}}\put(0,-.5){\line(0,1){7}}
\put(2.5,4.5){\line(2,-1){4}}\put(2.52,4.5){\line(2,-1){4}}
\multiput(-.1,4.55)(.25,0){11}{\hbox to 2pt{\hrulefill }}
\multiput(-.1,2.5)(.25,0){27}{\hbox to 2pt{\hrulefill }}
\multiput(6.5,.4)(0,.25){9}{\vrule height2pt}
\multiput(2.5,.41)(0,.25){17}{\vrule height2pt}
\put(4.3,3.8){\begin{footnotesize}$S$\end{footnotesize}}
\put(4.3,1.8){\begin{footnotesize}$E$\end{footnotesize}}
\put(1.8,3.4){\begin{footnotesize}$H$\end{footnotesize}}
\put(15.85,2.25){$\bullet$}
\put(10,0.5){\line(1,0){9}}\put(11,-.5){\line(0,1){7}}
\multiput(10.9,2.5)(.25,0){20}{\hbox to 2pt{\hrulefill }}
\multiput(16,.4)(0,.25){9}{\vrule height2pt}
\put(13.4,3.8){\begin{footnotesize}$S$\end{footnotesize}}
\put(13.4,1.8){\begin{footnotesize}$E$\end{footnotesize}}
\put(15.9,0){\begin{footnotesize}$s$\end{footnotesize}}
\put(10.5,2.35){\begin{footnotesize}$u$\end{footnotesize}}
\put(9,6){\begin{footnotesize}$H=\infty$\end{footnotesize}}
\end{picture}
\end{figure}

\begin{definition}
Let $S,S'$ be two sides of negative slope, with lengths $E,E'$ and heights $H,H'$, respectively. Let $h$ be a non-negative integer.
We define
$$
\res(S,S'):=\min\{EH',E'H\},\qquad
\res^h(S,S'):=\res(S,S')-hEE'.
$$
Let $\ty=(\psi_0;(\phi_1,\lambda_1,\psi_1);\cdots;(\phi_{i-1},\lambda_{i-1},\psi_{i-1}))$ be a type of order $i-1$ and let $\phi_i$ be a representative of $\ty$. Let $f(x),g(x)\in A[x]$ be two monic polynomials. We define

\begin{align*}
\res^h_{\ty,\phi_i}(f,g):=&\ f_0\cdots f_{i-1}\sum\nolimits_{S\in N^h_i(f),\,T\in N^h_i(g)}\res^h(S,T)\\=&\ f_0\cdots f_{i-1}\left[\left[\sum_{S\in N^h_i(f), T\in N^h_i(g)}\res(S,T)\right]-h\ell(N^h_i(f))\ell(N^h_i(g))\right].
\end{align*}
\end{definition}

As the notation suggests, $\res^h_{\ty,\phi_i}(f,g)$ depends on the choice of the representative $\phi_i$.
The value of $\res^h_{\ty,\phi_i}(f,g)$ is equal to infinity if and only if both Newton polygons $N_i(f)$, $N_i(g)$
have a side of slope $-\infty$. This happens if and only if both polynomials $f,g$ are divisible by $\phi_i$ in $A[x]$.

If  $\res^h_{\ty,\phi_i}(f,g)$ is finite, then it takes a non-negative integer value. It vanishes if and only if
either $N_i(f)$ or $N_i(g)$ have no side (neither finite nor infinite) of slope less than $-h$; that is, if and only if either $N^h_i(f)$ or $N^h_i(g)$ have length zero.

\subsection{The algorithm}\label{subsecAlgorithm}
Roughly speaking, the algorithm consists of a simultaneous application of the Montes algorithm to $f(x)$ and $g(x)$, and the accumulation of all values $\res^{\cs_i}_{\ty,\phi_i}(f,g)$, for all the types $\ty$ considered along the flow of the algorithm, such that $\ty$ divides both polynomials $f(x)$ and $g(x)$.  

The Montes algorithm is fully described in \cite{algorithm}, in terms of the theoretical background developed in \cite{HN}. For a short review, we address the reader to \cite{survey}, or \cite[Sec. 4]{BNS}. 

We present a detailed description of the routine {\tt pResultant}, directly in pseudocode. We denote by $\phi_i^\ty$, $\lambda_i^\ty$, $\psi_i^\ty$, $V_i^\ty$, etc. the data at the $i$-th level of a type $\ty$. \bigskip 

\noindent{\bf Routine pResultant}\vskip 1mm

\noindent INPUT:

$-$ Two monic polynomials $f(x),g(x)\in A[x]$, with $\res(f,g)\ne0$,  $\deg f\le \deg g$.

$-$ A non-zero prime ideal $\p$ of $A$.

\vskip 2mm

\st1 Set  {\tt Resvalue} $\leftarrow0$.

\st2  Factorize $f(y)\md{\p}=\prod_{\varphi}\varphi(y)^{a_\varphi}$\, over $(A/\p)[y]$.

\st3
FOR every irreducible factor $\varphi$ DO

\stst4 Set $b\leftarrow \ord_\varphi(g \md{\p})$. IF $b=0$ THEN continue to the next factor $\varphi$.

\stst5 Take a monic $\phi(x)\in A[x]$ such that $\phi(y)\md{\p}=\varphi(y)$. 

\stst6 Create a type $\ty$ of order zero with: $\, \psi_0^\ty\leftarrow\varphi$, $\phi_1^\ty\leftarrow \phi$, $\omega_1^\ty\leftarrow a_\varphi$, $\alpha_1^\ty\leftarrow b$, and
\vskip -1mm\stst{}\quad $\cs_1^\ty\leftarrow0$.
Initialize the list \texttt{Stack} $\leftarrow[\ty]$.
\vskip 1mm

\stst{}\!\!WHILE \texttt{Stack} is non-empty DO

\ststst1 Extract (and delete) the last  type $\ty_0$ from \texttt{Stack}. Let $i-1$ be its order.

\ststst2 Call {\tt Newton}($\ty_0$, $\omega_i$, $f(x)$) and {\tt Newton}($\ty_0$, $\alpha_i$, $g(x)$).

\ststst3 Compute $\res_{\ty_0,\phi_i}^{\cs_i}(f,g)$ and add this number to {\tt Resvalue}.

\ststst4 FOR every finite side $S$ of $N_i^{\cs_i}(f)$ DO

\stststst5  Set $\lambda_i^{\ty_0}\leftarrow$ slope of $S$. IF $\lambda_i^{\ty_0}$ is not a slope of $N_i^{\cs_i}(g)$ THEN continue 
\vskip -1mm
\stststst{}\quad to the next side $S$.
 
\stststst6 Compute $R_i(f),R_i(g)\in \F_i[y]$. Factorize $R_i(f)$ over $\F_i[y]$. 

\stststst7
FOR every monic irreducible factor $\psi$ of $R_i(f)$ DO

\ststststst{8} Set $b\leftarrow\ord_\psi R_i(g)$.  IF $b=0$ THEN continue to the next factor $\psi$.
 
\ststststst{9} IF $\omega_{i}>1$ THEN make a copy $\ty$ of the type $\ty_0$ and extend it to an 
\vskip -1mm
\ststststst{}\qquad \ order $i$ type by setting: $\,\psi_i^{\ty}\leftarrow \psi$, $\ \omega_{i+1}^\ty\leftarrow \ord_{\psi}R_i(f)$, $\alpha_{i+1}^\ty\leftarrow b$. 
\vskip -1mm
\ststststst{}\qquad \ Compute a representative  $\phi\in A[x]$ of $\ty$.

\ststststst{}\quad ELSE apply one iteration of SFL to compute $\phi$ with $h_\phi\ge 2h_{\phi_i^\ty}$.

\ststststst{\!\!\!10}  IF $\deg \phi=\deg \phi_i^\ty$ THEN set 
\vskip-1mm\ststststst{}\qquad\quad 
$\phi_i^{\ty}\leftarrow \phi, \ \cs_i^{\ty}\leftarrow|\lambda_i^{\ty}|,\ \omega^{\ty}_i\leftarrow\omega^\ty_{i+1},\ \alpha^{\ty}_i\leftarrow\alpha^\ty_{i+1}$, 
\vskip-1mm\ststststst{}\qquad\quad  and delete all data in the $(i+1)$-th level of $\ty$ 
\vskip-1mm\ststststst{}\quad ELSE \,set $\phi_{i+1}^\ty\leftarrow\phi$, $\cs_{i+1}^\ty\leftarrow0$. 

\ststststst{\!\!\!11} Add $\ty$ to the \texttt{Stack}.

\stst{}\!\!END WHILE\vskip 2mm

\noindent OUTPUT:

\noindent $-$ The $\p$-adic valuation of $\res(f,g)$, as the value of the variable  {\tt Resvalue}. \medskip

\begin{remark}\rm
(1) \ We mentioned already that $\om_{i}=\ell(N_{i}^{\cs_{i}}(f))$. Thus, the computation (and storing) of this invariant saves operations in the {\tt Newton} routine because we know a piori how many coefficients of the $\phi_i$-development must be computed. For the same reason, we compute and store in all types a completely analogous invariant $\alpha_i$ telling a priori the value of $\ell(N_{i}^{\cs_{i}}(g))$.\medskip

(2) \ In step {\bf 9}, the property $\omega_i=1$ implies that $\ty_0$ is already $f$-complete (Definition \ref{optimal}), and $\phi_i^\ty$ is an Okutsu approximation to one of the irreducible factors (say) $F(x)$ of $f(x)$ over $\oo_\p$. In this case, the SFL routine may be applied, and one single iteration of its main loop leads to an Okutsu approximation $\phi$ with $h_\phi\ge 2h_{\phi_i^\ty}$ \cite{GNP}. On the other hand, the standard construction of a representative of the type $\ty$ leads to an Okutsu approximation $\phi$ with $h_\phi\ge h_{\phi_i^\ty}$. Thus, the use of SFL accelerates the process of getting $\phi$   sufficiently close to $F$, to have $(\ty_0;(\phi,\lambda,\psi))\nmid g(x)$ in a future iteration of the WHILE loop.

This acceleration leads to a significant improvement of the routine in cases where the polynomials $f(x)$ and $g(x)$ have $\p$-adic irreducible factors which are very close one to each other.\medskip

(3) \ Step {\bf 10} takes care of the optimization. If $\deg\phi=\deg \phi_i^\ty$, then the future information provided by the Newton polygon $N_{i+1}^-(f)$, with respect to the pair $(\ty,\phi)$, is equivalent to the information provided by  $N_{i}^{\cs_i}(f)$, with respect to the pair $(\ty_0,\phi)$ \cite[Sec. 3.2]{algorithm}. The latter option is more efficient because we work at a lower order.   

\end{remark}

\begin{theorem}\label{algorithm}
The routine {\tt pResultant} terminates and its output value is indeed $v_\p(\res(f,g))$. It requires $O\left(n^{2+\epsilon}+n^{1+\epsilon}(1+\delta)\log q+n^{1+\epsilon}\delta^{2+\epsilon}\right)$
$\p$-small o\-pe\-rations in $A$, where $n=\max\{\deg g,\deg h\}$, $\delta=v_\p(\res(g,h))$ and $q=\#A/\p$.
\end{theorem}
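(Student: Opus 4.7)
The plan is to prove the three assertions — termination, correctness, and the complexity bound — in that order, reducing each as much as possible to results already available in the literature. The overall shape of the argument will mirror the proof of Theorem~\ref{complexityDisc}, with the role played there by $v_\p(\dsc(g))$ now played by $\delta=v_\p(\res(f,g))$. Throughout I will read the routine as two simultaneous optimized Montes runs (one attached to $f$, one to $g$) that share a single tree of types, together with the accumulation of the combinatorial quantities $\res^{\cs_i}_{\ty_0,\phi_i}(f,g)$ at every node.

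For termination, my plan is to observe that each iteration of the WHILE loop either strictly increases the order of the type pushed onto the stack, or — in the case $\omega_i=1$, where $\ty_0$ is already $f$-complete and singles out a unique $\p$-adic irreducible factor $F$ of $f$ — applies one SFL step that doubles $h_{\phi_i^\ty}$. The first option can only happen finitely many times along a single branch, since the order is bounded by the Okutsu depth of some $\p$-adic factor of $f$, which is finite. The SFL branch must also terminate because $\res(f,g)\ne0$ forces $F$ to differ from every $\p$-adic irreducible factor of $g$, so once $\phi$ is sufficiently close to $F$, the test $b=\ord_\psi R_i(g)=0$ of step~{\bf 8} succeeds and no new type is pushed; this uses the standard fact that close-enough Okutsu approximations separate distinct $\p$-adic factors, as in \cite[Lem.~4.5]{okutsu}.

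For correctness, I would follow the analysis of \cite[Sec.~4.1]{HN}, whose starting point is the master formula $v_\p(\res(f,g))=\sum_{F\mid f,\,G\mid g}v_\p(\res(F,G))$ with the sum ranging over $\p$-adic irreducible factors, and which splits the contribution of each pair $(F,G)$ across the Montes tree according to the last type dividing both. The task is then to show that at each node $\ty_0$ of the tree, the quantity $\res^{\cs_i}_{\ty_0,\phi_i}(f,g)$ added to {\tt Resvalue} equals the sum of $v_\p(\res(F,G))$ over the pairs first separated at that node. The weight $f_0\cdots f_{i-1}$ records the residual degrees accumulated above level $i$, the expression $\min\{EH',E'H\}$ in the definition of $\res(S,T)$ is the Newton-polygon formula for $v_\p(\res(F,G))$ when $F$ and $G$ contribute sides of common slope $\lambda_i$, and the subtraction $-h\,\ell(N_i^h(f))\ell(N_i^h(g))$ cancels the portion of the polygons of slope $\le-h$ already counted higher up via the inherited cutting slope. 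Summing over every node visited — and using that every pair $(F,G)$ is eventually separated, since $F\ne G$ and their Okutsu depths are finite — recovers the master formula. The hardest step in the whole argument will be exactly this bookkeeping: verifying that the cutting-slope correction $-h\,\ell\,\ell$ is precisely what is needed to make the optimized tree add up to the same sum as the non-optimized one of \cite[Sec.~4.1]{HN}.

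For the complexity bound, the plan is to import the analysis of \cite{BNS}: the Montes skeleton on the two polynomials costs $O(n^{2+\epsilon}+n^{1+\epsilon}(1+\delta)\log q+n^{1+\epsilon}\delta^{2+\epsilon})$ $\p$-small operations, where $\delta=v_\p(\res(f,g))$ dominates each local $\delta_\P$ that appears along the tree, exactly as in the proof of Theorem~\ref{complexityDisc}. The extra combinatorial computation of each $\res^{\cs_i}_{\ty_0,\phi_i}(f,g)$ is linear in the number of sides of the two Newton polygons already produced by the {\tt Newton} calls, hence absorbed into the cost of those calls. Finally, by Lemma~\ref{complexitySFL} and the doubling of $h_\phi$, the SFL iterations triggered at $\omega_i=1$ contribute a total of $O(n\delta^{1+\epsilon})$ $\p$-small operations, which is dominated by the Montes term. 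Collecting the three bounds yields the claimed complexity.
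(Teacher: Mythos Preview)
Your proposal is correct and follows essentially the same route as the paper: reduce correctness to \cite[Thm.~4.10]{HN} for the non-optimized algorithm, argue that the optimized version with cutting slopes accumulates the same partial resultants (the paper does this by explicitly introducing an intermediate {\tt basic pResultant} routine and invoking \cite[Props.~3.4+3.5, Secs.~3.2+3.3]{algorithm}, which is exactly your ``cutting-slope correction'' bookkeeping), and defer the complexity estimate to the analysis in \cite{BNS}. The only organizational difference is that the paper packages the optimized/non-optimized comparison as a single equivalence statement rather than arguing termination and correctness separately as you do.
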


\begin{proof}
Let us call {\tt basic pResultant} the algorithm obtained by eliminating the optimization procedure of step {\bf 10}; that is, by replacing this step by:\medskip

{$\mathbf{10'}$} \ Set $\phi_{i+1}^\ty\leftarrow\phi$, $\cs_{i+1}^\ty\leftarrow0$. 
\medskip

By \cite[Thm. 4.10]{HN}, the {\tt basic pResultant} algorithm terminates and the final output value of the variable {\tt Resvalue} is $v_\p(\res(f,g))$. 

On the other hand, {\tt pResultant} and {\tt basic pResultant} are equivalent algorithms, in the sense that they have the same number of iterations of the WHILE loop and they accumulate the same value to the variable {\tt Resvalue} at each ite\-ration. 
This is a consequence of \cite[Props. 3.4+3.5]{algorithm} and the rest of the arguments of \cite[Secs. 3.2+3.3]{algorithm}, where a completely analogous situation was discussed when we compared basic and optimized versions of the Montes algorithm, both leading to the computation of $\ind_\p(f)$ by the accumulation of partial indices. 

The complexity analysis is obtained by completely analogous arguments to those used in \cite{BNS} to estimate the complexity of the Montes algorithm. 
\end{proof}

\begin{corollary}\label{complexityRes}
If $\p$ is small, then {\tt pResultant} requires $O\left(n^{2+\epsilon}+n^{1+\epsilon}\delta^{2+\epsilon}\right)$ word operations.
\end{corollary}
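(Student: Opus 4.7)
The plan is immediate and follows the same pattern used to derive Corollary \ref{complexityFast} from Theorem \ref{complexityDisc}. First, I would recall the conversion between $\p$-small operations and word operations stated just after the definition of $\p$-small operation: each $\p$-small operation costs $O(\log(q)^{1+\epsilon})$ word operations. Under the hypothesis that $\p$ is small, i.e.\ $\log q = O(1)$, this factor is absorbed into the constant, so the bound on $\p$-small operations from Theorem \ref{algorithm} transfers directly to a bound on word operations of the same shape, namely
$$O\!\left(n^{2+\epsilon}+n^{1+\epsilon}(1+\delta)\log q+n^{1+\epsilon}\delta^{2+\epsilon}\right).$$
Applying $\log q = O(1)$ to the middle summand reduces it to $O\!\left(n^{1+\epsilon}(1+\delta)\right)$.

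It then remains to absorb this residual term into the other two. If $\delta=0$, then $n^{1+\epsilon}(1+\delta)=n^{1+\epsilon}$, which is dominated by $n^{2+\epsilon}$. If $\delta\ge 1$, then $n^{1+\epsilon}(1+\delta)\le 2\,n^{1+\epsilon}\delta\le 2\,n^{1+\epsilon}\delta^{2+\epsilon}$. In either case the middle term is $O\!\left(n^{2+\epsilon}+n^{1+\epsilon}\delta^{2+\epsilon}\right)$, yielding exactly the claimed bound. There is no real obstacle here: the statement is a purely mechanical specialization of Theorem \ref{algorithm} obtained by rescaling operation costs under the assumption of a bounded residue field.
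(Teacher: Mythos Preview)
Your proposal is correct and matches the paper's approach: the paper states Corollary~\ref{complexityRes} without proof, treating it as an immediate specialization of Theorem~\ref{algorithm} under $\log q=O(1)$, exactly parallel to the passage from Theorem~\ref{complexityDisc} to Corollary~\ref{complexityFast}. Your explicit absorption of the middle term is the natural way to spell out what the paper leaves implicit.
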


As it was the case for the routine {\tt pDiscriminant}, if $\chr(K)=0$, the assumption $\res(f,g)\ne0$ may be omitted by considering an upper bound for $v_\p(\res(f,g))$. For instance, for $A=\Z$, $n=\deg f\le m=\deg g$, we have \cite[Thm. 6.23]{vzGG},
$$
\res(f,g)\le \|f\|_2^m\|g\|_2^n=(|a_0|^2+\cdots+|a_n|^2)^{m/2}(|b_0|^2+\cdots+|b_m|^2)^{n/2}.
$$
We leave to the reader the derivation of similar upper bounds, $v_\p(\res(f,g))< N(f,g)$, in the general case. If $\res(f,g)=0$, the WHILE loop of {\tt pResultant} is an endless loop. However, it increases {\tt Resultant} by a positive integer value at each iteration. Thus, we may introduce a control instruction allowing the next iteration as long as the value of {\tt Resultant} is less than $N(f,g)$, and breaking the loop with the output $v_\p(\res(f,g))=\infty$ otherwise.

\section{Numerical tests}
We have implemented the routines {\tt pDiscriminant} and {\tt pResultant} in the case $A=\Z$. They are included in the Magma package {\tt pFactors.m}, which may be downloaded from {\tt http://montesproject.blogspot.com}. 

We present in this section some numerical tests for several polynomials selected from the families of test polynomials in the appendix of \cite{GNP}. All tests have been done in a Linux server, with two Intel Quad Core processors, running at 3.0 Ghz, with 32Gb of RAM memory. Times are expressed in seconds. 

We compare running times of our routines (abbreviated as {\tt pDis}, {\tt pRes} in the tables), with the {\tt naive} routine that computes first $\dsc(g)$ or $\res(g,h)$, and then its $p$-valuation. The numerical results show that the {\tt pDiscriminant} and {\tt pResultant} routines are much more efficient than the corresponding {\tt naive} routines, when the degree of the polynomials and/or the size of the coefficients grow.  

\subsection{Numerical tests of pDiscriminant}\mbox{\null}

\noindent{\bf Example 1.}
Let $p$ be a prime number and $n$ a positive integer. Consider the polynomial of degree $n$:
$$
f(x)=\left(x+p+p^2+\cdots+p^{20}\right)^n+p^{20n+1}.
$$
This polynomial is irreducible over $\Z_p[x]$ and it has Okutsu depth equal to one. It has $v_p(\dsc(f))=(20n+1)(n-1)+nv_p(n)$ \cite[App.]{GNP}.\bigskip

\begin{center}
\begin{tabular}{|c|c|c|c|r|}
\hline
 $p$&$\deg f$&$v_p(\dsc(f))$&{\tt pDisc}&{\tt naive}\\\hline
$2$&$20$&$7659$&$0.01$&$0.00$\\\hline
$2$&$50$&$49099$&$0.02$&$0.83$\\\hline
$2$&$100$&$198299$&$0.07$&$26.60$\\\hline
$2$&$150$&$447299$&$0.13$&$190.57$\\\hline
$59$&$20$&$7619$&$0.02$&$0.00$\\\hline
$59$&$50$&$49049$&$0.06$&$66.14$\\\hline
$59$&$100$&$198099$&$0.22$&$1887.55$\\\hline
$59$&$150$&$447149$&$0.59$&$12225.08$\\\hline
$59$&$500$&$4990499$&$12.80$&$>24$\,hours
\\\hline
\end{tabular}
\end{center}\bigskip

\noindent{\bf Example 2.}
Let us test the influence of the existence of many $p$-adic irreducible factors.
Let $p>5$ be a prime number, and $m$ a positive integer, $m<p/2$. Let $g_0(x)=x^ {10}+2p^{11}$. Consider the polynomial of degree $10m$:
$$
g(x)=g_0(x)g_0(x+2)\cdots g_0(x+2(m-1))+2p^{110m}.
$$
This polynomial has $m$ irreducible factors of degree $10$ over $\Z_p[x]$, all of them with Okutsu depth equal to one. It has $v_p(\dsc(g))=99m$ \cite[App.]{GNP}.\bigskip

\begin{center}
\begin{tabular}{|c|c|c|c|c|r|}
\hline
 $p$&$m$&$\deg g$&$v_p(\dsc(g))$&{\tt pDisc}&{\tt naive}\\\hline
$7$&$3$&$30$&$297$&$0.01$&$0.22$\\\hline
$23$&$5$&$50$&$495$&$0.03$&$8.30$\\\hline
$23$&$10$&$100$&$990$&$0.04$&$243.76$\\\hline
$43$&$5$&$50$&$495$&$0.04$&$12.92$\\\hline
$43$&$10$&$100$&$990$&$0.04$&$378.17$\\\hline
$43$&$15$&$150$&$1485$&$0.04$&$2598.82$\\\hline
$101$&$50$&$500$&$4950$&$0.27$&$>24$\,hours\\\hline
\end{tabular}
\end{center}\bigskip

\noindent{\bf Example 3.}
Finally, let us test the influence of the growth of the Okutsu depth in both routines. Let $p>3$ be a prime number. Consider the following polynomials:
\begin{center}
$$\as{1.2}
\begin{array}{l}
\displaystyle E_1(x)=x^2+p\\ 
\displaystyle E_2(x)=E_1(x)^{2}+(p-1)p^{3}x\\
\displaystyle E_3(x)=E_2(x)^{3}+p^{11}\\
\displaystyle E_4(x)=E_3(x)^{3}+p^{29}xE_2(x)\\
\displaystyle E_5(x)=E_4(x)^{2}+(p-1)p^{42}xE_1(x)E_3(x)^2 \\ 
\displaystyle E_6(x)=E_5(x)^{2}+p^{88}xE_3(x)E_4(x)\\
\displaystyle E_7(x)=E_6(x)^{3}+p^{295}E_2(x)E_4(x)E_5(x)\\
\displaystyle E_8(x)=E_7(x)^{2}+(p-1)p^{632}xE_1(x)E_2(x)^2E_3(x)^2E_6(x)\\
\end{array}
$$
\end{center}

These polynomials are all irreducible over $\Z_p[x]$ and the Okutsu depth of $E_j$ is equal to $j$, for all $1\le j\le 8$ \cite[App.]{GNP}.\bigskip

\begin{center}
\begin{tabular}{|c|c|c|c|c|r|}
\hline
$p$&$j$&$\deg E_j$&$v_p(\dsc(E_j))$&{\tt pDisc}&{\tt naive}\\\hline
$5$&$5$&$72$&$4671$&$0.01$&$0.09$\\\hline
$5$&$6$&$144$&$18899$&$0.02$&$1.66$\\\hline
$5$&$7$&$432$&$171383$&$0.15$&$204.54$\\\hline
$5$&$8$&$864$&$686825$&$0.72$&$4565.77$\\\hline
$61$&$5$&$72$&$4671$&$0.01$&$0.39$\\\hline
$61$&$6$&$144$&$18899$&$0.02$&$9.63$\\\hline
$61$&$7$&$432$&$171383$&$0.22$&$1580.66$\\\hline
$61$&$8$&$864$&$686825$&$1.25$&$35623.63$
\\\hline
\end{tabular}
\end{center}\bigskip

\subsection{Numerical tests of pResultant}\mbox{\null}

\noindent{\bf Example 4.}
Let $p>5$ be a prime number, $m$ a positive integer, $m<p/2$, and $g(x)$ the polynomial of Example 2. Let $f(x)$ be the polynomial of Example 1 of degree $n=10m=\deg g$.\medskip

\begin{center}
\begin{tabular}{|c|c|c|r|r|}
\hline
 $p$&$\deg f=\deg g$&$v_p(\res(f,g))$&{\tt pRes}&{\tt naive}\\\hline
$7$&$30$&$300$&$0.01$&$0.94$\\\hline
$11$&$50$&$500$&$0.00$&$19.58$\\\hline
$23$&$100$&$1000$&$0.01$&$1060.03$\\\hline
$31$&$150$&$1500$&$0.01$&$8433.39$\\\hline
$43$&$200$&$2000$&$0.03$&$38430.25$\\\hline
$101$&$500$&$5000$&$0.27$&$>24$\,hours\\\hline
\end{tabular}
\end{center}\bigskip

\noindent{\bf Example 5.}
Let $p>3$ be a prime number, and consider the polynomials $E_j(x)$, $1\le j\le 8$, of Example 3.\medskip

\begin{center}
\begin{tabular}{|c|c|c|c|r|r|}
\hline
$p$&$i$&$j$&$v_p(\res(E_i,E_j))$&{\tt pRes}&{\tt naive}\\\hline
$5$&$5$&$6$&$9557$&$0.04$&$0.23$\\\hline
$5$&$5$&$7$&$28671$&$0.14$&$2.70$\\\hline
$5$&$5$&$8$&$57342$&$0.61$&$13.76$\\\hline
$5$&$6$&$7$&$57343$&$0.15$&$13.11$\\\hline
$5$&$6$&$8$&$114686$&$0.67$&$66.12$\\\hline
$5$&$7$&$8$&$344059$&$0.83$&$885.89$\\\hline
$101$&$5$&$6$&$9557$&$0.05$&$1.95$\\\hline
$101$&$5$&$7$&$28671$&$0.22$&$28.11$\\\hline
$101$&$5$&$8$&$57342$&$1.12$&$150.30$\\\hline
$101$&$6$&$7$&$57343$&$0.25$&$141.52$\\\hline
$101$&$6$&$8$&$114686$&$1.31$&$746.78$\\\hline
$101$&$7$&$8$&$344059$&$1.60$&$9335.33$\\\hline
\end{tabular}
\end{center}\bigskip

\end{document}